\newcommand{\bbC}{{\mathbb{C}}}
\newcommand{\bbD}{{\mathbb{D}}}
\newcommand{\bbR}{{\mathbb{R}}}
\newcommand{\fre}{{\mathfrak{e}}}
\newcommand{\al}{\alpha}
\newcommand{\be}{\beta}
\newcommand{\om}{\omega}
\newcommand{\Om}{\Omega}
\newcommand{\ze}{\zeta}
\newcommand{\calP}{{\mathcal P}}
\newcommand{\no}{\nonumber}
\newcommand{\lb}{\label}
\newcommand{\ol}{\overline}
\newcommand{\bi}{\bibitem}
\newcommand{\wti}{\widetilde}
\newcommand{\st}{:}
\newcommand{\sgn}{\mathrm{sgn}}
\newcommand{\ord}{\mathrm{ord}}
\newcommand{\cvh}{\text{\rm{cvh}}}
\newcommand{\beq}{\begin{equation}}
\newcommand{\eeq}{\end{equation}}
\newcommand{\ba}{\begin{align}}
\newcommand{\ea}{\end{align}}
\newcommand{\eps}{\varepsilon}
\newcommand{\norm}[1]{\lVert#1\rVert}
\newcommand{\bs}{\backslash}
\newcommand{\pd}{\partial}
\DeclareMathOperator{\ca}{C}
\newcounter{smalllist}
\newtheorem{theorem}{Theorem}[section]
\newtheorem{lemma}[theorem]{Lemma}
\newtheorem{corollary}[theorem]{Corollary}
\theoremstyle{definition}
\newtheorem*{remark}{Remark}
\numberwithin{equation}{section} 
\begin{document}

\title[Bounds for weighted Chebyshev and residual polynomials]{Bounds for weighted Chebyshev and residual polynomials on subsets of $\bbR$}

\author[J.~S.~Christiansen, B.~Simon and M.~Zinchenko]{Jacob S.~Christiansen$^{1,4}$, Barry Simon$^{2,5}$ \\and
Maxim~Zinchenko$^{3,6}$}

\thanks{$^1$ Centre for Mathematical Sciences, Lund University, Box 118, 22100 Lund, Sweden.
E-mail: jacob{\_}stordal.christiansen@math.lth.se}

\thanks{$^2$ Departments of Mathematics and Physics, Mathematics 253-37, California Institute of Technology, Pasadena, CA 91125.
E-mail: bsimon@caltech.edu}

\thanks{$^3$ Department of Mathematics and Statistics, University of New Mexico,
Albuquerque, NM 87131, USA; E-mail: maxim@math.unm.edu}

\thanks{$^4$ Research supported by VR grant 2023-04054 from the Swedish Research Council and in part by DFF research project 1026-00267B from the Independent Research Fund Denmark.}

\thanks{$^5$ Research supported in part by Israeli BSF Grant No.~2020027.}

\thanks{$^6$ Research supported in part by Simons Foundation grant MP--TSM--00002651.}

\dedicatory{Dedicated to the 80th birthday of Ed Saff.}

\date{\today}
\subjclass[2020]{41A50, 30C10, 30C15, 30E15}
\keywords{Chebyshev polynomials, Widom factors, Szeg\H{o}--Widom asymptotics, Totik--Widom upper bound}

\begin{abstract}
We give upper and lower bounds for weighted Chebyshev and residual polynomials on subsets of the real line. 
As an application, we prove a Szeg\H{o}-type theorem in the setting of Parreau--Widom sets.

\end{abstract}

\maketitle

\section{Introduction}

Let $\fre\subset\bbC$ be an infinite compact set and $w:\fre\to[0,\infty)$ a weight function which we assume to be upper semi-continuous and positive at infinitely many points of $\fre$. We denote the sup norm over $\fre$ by $\|\cdot\|_\fre$. The $n$th weighted Chebyshev polynomial, denoted $T_{n,w}$, is the unique polynomial that minimizes $\|wP_n\|_\fre$ among all monic polynomials $P_n$ of degree $n$. We set
\begin{align}\label{tn-cheb}
  t_n(\fre,w)=\|wT_{n,w}\|_\fre
\end{align}
and for sets $\fre$ of positive logarithmic capacity, we define the Widom factors by
\begin{align}\label{Wn-cheb}
  W_n(\fre,w) := \frac{t_n(\fre,w)}{\ca(\fre)^n}.
\end{align}

In the following, we will always assume that $\ca(\fre)>0$ and refer the reader to, e.g., \cite{ArmGar01, Hel09, Lan72, MF06, Ran95} for the basics of potential theory. 
In the unweighted case (i.e., when $w\equiv1$), Szeg\H{o} \cite{Sze1924} showed that the Widom factors satisfy a universal lower bound for subsets of the complex plane, namely
\begin{align}\label{SzLB}
  W_n(\fre,1) \ge 1, \quad n\ge1.
\end{align}
For subsets of the real line, Schiefermayr \cite{Sch08} proved the doubled lower bound
\begin{align}\label{SchLB}
  W_n(\fre,1) \ge 2, \quad n\ge1.
\end{align}
Regarding upper bounds, we showed in \cite{CSZ1} (see also \cite{CSYZ2} and \cite{And16, And17, AN19, ToVa15}) that for real subsets $\fre$ that are regular for potential theory, the following holds:
\begin{align}\label{UB}
  W_n(\fre,1) \le 2\exp\big[PW(\fre)], \quad n\ge1.
\end{align}
Here, $PW(\fre)$ is the Parreau--Widom constant 
given by
\begin{align}\label{PW-cheb}
  PW(\fre):=\sum_{\ell} g_\fre(\ze_\ell,\infty),
\end{align}
where $g_\fre(z,z_0)$ denotes the Green function of the domain $\ol\bbC\bs\fre$ with a logarithmic pole at $z_0$ and $\{\ze_\ell\}$ are the critical points of $g_\fre(z,\infty)$ in $\bbC\bs\fre$, that is, solutions of $\nabla g_\fre(z,\infty)=0$. When $\fre\subset\bbR$, the critical points $\{\ze_\ell\}$ are located in the gaps of $\fre$, with at most one per gap due to the concavity of $g_\fre(z,\infty)$. 
Following \cite{Has83}, a compact and regular set $\fre$ is referred to as a Parreau–Widom set if $PW(\fre)<\infty$.

Extensions of the above bounds to the weighted setting rely on the so-called Szeg\H{o} factor $S(\fre,w)$ of the weight $w$. Let $d\rho_\fre$ be the equilibrium measure for the set $\fre$. A weight $w$ is said to be of Szeg\H{o} class (for $\fre$) if
\begin{align} \lb{S-cheb}
  S(\fre,w) := \exp\left[\int\log w(z)\,d\rho_\fre(z)\right] > 0.
\end{align}
We note that, trivially, $S(\fre,1)=1$.

In the weighted setting, the Widom factors satisfy an analog of Szeg\H{o}'s lower bound \eqref{SzLB} for subsets of the complex plane. By \cite[Chap.~I, Thm.~3.6]{ST97} and \cite[Thm.~13]{NSZ21}, we have
\begin{align} \label{wSzLB-cheb}
  W_n(\fre,w) \ge S(\fre,w), \quad n\ge1.
\end{align}
However, unlike the unweighted case, the lower bound \eqref{wSzLB-cheb} is already optimal for subsets of the real line, see \cite[Thm.~13]{NSZ21}. In particular, the doubling of the lower bound as in \eqref{SchLB} vs \eqref{SzLB} does not generally hold in the weighted setting.
Nevertheless, it was observed in \cite{SZ21,Alp22,AZ24} that the doubled version of \eqref{wSzLB-cheb} does hold for some special weights. In particular, for $\fre\subset\bbR$ and weights of the form $w_0(x)=1/|P_m(x)|$, where $P_m$ is a polynomial of degree $m$, it is shown in \cite{AZ24} that
\begin{align}\label{wLB-cheb}
  W_n(\fre,w_0) \ge 2S(\fre,w_0), \quad n>m.
\end{align}

In this paper, we derive a weighted analog of the upper bound \eqref{UB} for weights of the form $w_0(x)=1/|P_m(x)|$, where $P_m$ is a real polynomial of degree $m$. In particular, we show that for regular Parreau--Widom sets $\fre\subset\bbR$, the following holds:
\begin{align}\label{wUB-cheb}
  W_n(\fre,w_0) \le 2S(\fre,w_0)\exp\big[PW(\fre)\big], \quad n>m.
\end{align}
For sets $\fre\subset[-1,1]$, the upper bound \eqref{wUB-cheb} with $m=0$ was obtained earlier for the weight $w_0(x)=\sqrt{1-x^2}$ in \cite{SZ21} and for the weights $w_0(x)=\sqrt{1\pm x}$ in \cite{Alp22}.

From the non-asymptotic bounds \eqref{wLB-cheb} and \eqref{wUB-cheb}, we proceed to derive asymptotic bounds for more general weights. Specifically, for weights $w$ that are continuous at a.e.\ point of $\fre$ and satisfy $w\ge|P|$ on $\fre$ for some polynomial $P\not\equiv 0$, we show the asymptotic lower bound
\begin{align}\label{wALB-cheb}
  \liminf_{n\to\infty} W_n(\fre,w) \ge 2S(\fre,w).
\end{align}
For finite gap sets $\fre\subset\bbR$, the asymptotic bound \eqref{wALB-cheb} is derived in \cite{AZ24} for arbitrary continuous weights $w$ with at most finitely many zeros, and also for some special weights with infinitely many zeros. Whether or not \eqref{wALB-cheb} holds for all continuous weights $w$ is an open problem.

For regular Parreau--Widom sets $\fre\subset\bbR$ and arbitrary upper semi-continuous weights $w$, we show the asymptotic upper bound
\begin{align}\label{wAUB-cheb}
  \limsup_{n\to\infty} W_n(\fre,w) \le 2S(\fre,w)\exp\big[PW(\fre)\big].
\end{align}
In \cite[Sect.~11]{Wid69}, Widom established this asymptotic bound 
for complex sets $\fre$ consisting of finitely many disjoint $C^{2+}$ Jordan curves and/or arcs.

By combining the bounds in \eqref{wSzLB-cheb} and \eqref{wAUB-cheb}, we get a Szeg\H{o}-type theorem
(see also \cite{JSC} for a similar result on orthogonal polynomials):
\begin{theorem}
  Let $\fre\subset\bbR$ be a compact and regular Parreau--Widom set, and let $w$ be an upper semi-continuous weight on $\fre$. Then
\begin{align}
  (a) \; \inf_{n}W_n(\fre,w) > 0 \quad\Longleftrightarrow\quad
  (b) \; \int\log w(z)\,d\rho_\fre(z)>-\infty.
\end{align}
Moreover, if either $(a)$ or $(b)$ holds, then also $\sup_{n}W_n(\fre,w)<\infty$.
\end{theorem}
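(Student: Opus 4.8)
The plan is to read the theorem off directly from the two-sided estimates already assembled in the introduction, after first translating condition $(b)$ into a statement about the Szeg\H{o} factor. Since $\fre$ is compact and $w$ is upper semi-continuous, $w$ attains its supremum and is bounded above by $\|w\|_\fre<\infty$; as $\log$ is increasing, $\log w\le\log\|w\|_\fre$, and because $d\rho_\fre$ is a probability measure the integral $\int\log w\,d\rho_\fre$ is well defined in $[-\infty,\log\|w\|_\fre]$. Consequently $0\le S(\fre,w)\le\|w\|_\fre<\infty$ in all cases, so $S(\fre,w)$ is automatically finite, and condition $(b)$ is equivalent to the single inequality $S(\fre,w)>0$. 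This reduction is the one genuine preliminary.

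For the implication $(b)\Rightarrow(a)$ I would invoke the weighted Szeg\H{o} lower bound \eqref{wSzLB-cheb}: assuming $S(\fre,w)>0$, one has $W_n(\fre,w)\ge S(\fre,w)>0$ for every $n\ge1$, whence $\inf_n W_n(\fre,w)\ge S(\fre,w)>0$. This direction is immediate and uses none of the Parreau--Widom hypothesis.

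For $(a)\Rightarrow(b)$ I would argue by contraposition using the asymptotic upper bound \eqref{wAUB-cheb}, which is where the Parreau--Widom assumption enters, through the finiteness of $PW(\fre)$. Suppose $(b)$ fails, so $S(\fre,w)=0$. Then \eqref{wAUB-cheb} gives $\limsup_{n\to\infty}W_n(\fre,w)\le 2S(\fre,w)\exp[PW(\fre)]=0$; since each $W_n(\fre,w)\ge0$, this forces $W_n(\fre,w)\to0$ and therefore $\inf_n W_n(\fre,w)=0$, i.e.\ $(a)$ fails.

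Finally, for the ``moreover'' clause I would note that under $(b)$ we have $0<S(\fre,w)<\infty$ together with $PW(\fre)<\infty$, so \eqref{wAUB-cheb} makes $\limsup_n W_n(\fre,w)$ finite, bounding all but finitely many terms. Each individual $W_n(\fre,w)=t_n(\fre,w)/\ca(\fre)^n$ is finite because $w$ is bounded, $T_{n,w}$ is a polynomial bounded on the compact set $\fre$, and $\ca(\fre)>0$; hence the finitely many remaining terms are finite as well and $\sup_n W_n(\fre,w)<\infty$. The only real obstacle in this theorem lies upstream, in establishing the bounds \eqref{wSzLB-cheb} and especially \eqref{wAUB-cheb}; once those are in hand, the deduction is a matter of bookkeeping and of checking the edge cases just described, namely the well-definedness of the defining integral of $S(\fre,w)$ and the finiteness of each Widom factor.
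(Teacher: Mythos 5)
Your proposal is correct and follows exactly the paper's route: the paper presents this theorem as an immediate corollary of the universal lower bound \eqref{wSzLB-cheb} and the asymptotic upper bound \eqref{wAUB-cheb} (the latter holding for arbitrary upper semi-continuous weights, including non-Szeg\H{o}-class ones, which is what makes your contrapositive step valid). Your added bookkeeping --- the reduction of $(b)$ to $S(\fre,w)>0$ and the finiteness of each individual $W_n(\fre,w)$ --- is precisely the routine verification the paper leaves implicit.
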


Weighted Chebyshev polynomials are $L^\infty(\fre,w)$ extremal polynomials normalized at infinity. It is worth discussing them in the context of a larger family of $L^\infty(\fre,w)$ extremal polynomials normalized at a general point $x_*\in\ol\bbC\bs\fre$.
When the normalization point $x_*$ is finite, such extremal polynomials are called residual polynomials while their duals (polynomials of unit norm that maximize the value at $x_*$) are called polynomials of extremal growth.
Such polynomials arise naturally in the analysis of Krylov subspace iterations, see, for example, \cite{DTT98,Fis96,Kui06}. For prior studies on residual polynomials/polynomials of extremal growth and their applications, we refer to \cite{FR86,Fre88,FF90,FF91,Fis92,Yud99,Peh09,Sch11, Eic17,EY18,EY21,CSZ5,ELY24,BLO21,CLW24}.

The $n$th weighted residual polynomial, denoted $T_{n,w,x_*}$, is the unique polynomial that minimizes $\|wP_n\|_\fre$ among all polynomials $P_n$ of degree at most $n$ normalized by $P_n(x_*)=1$. We denote the minimal norms by
\begin{align}\label{tn-res}
  t_n(\fre,w,x_*)=\|wT_{n,w,x_*}\|_\fre
\end{align}
and define the corresponding Widom factors by
\begin{align}\label{Wn-res}
  W_n(\fre,w,x_*) := t_n(\fre,w,x_*)e^{ng_\fre(x_*,\infty)}.
\end{align}
The Widom factors introduced in \eqref{Wn-cheb} and \eqref{Wn-res} are compatible in the sense that
\begin{align}\label{Wn-cont}
  \lim_{x_*\to\infty} W_n(\fre, w, x_*) = W_n(\fre,w).
\end{align}
Because of this, we set $W_n(\fre,w,\infty)=W_n(\fre,w)$ and treat the Chebyshev and residual polynomials in a unified way.

In the following, we let $\Om$ be the unbounded component of $\ol\bbC\bs\fre$ and assume that $x_*\in\Om$.
The Szeg\H{o} factor suitable for the $L^\infty(\fre,w)$ extremal polynomials normalized at $x_*$ is given by
\begin{align} \lb{S-res}
  S(\fre,w,x_*) := \exp\left[\int\log w(z)\,d\om_\fre(z,x_*)\right],
\end{align}
where $d\om_\fre(\cdot,x_*)$ is the harmonic measure for the domain $\Om$. We recall from \cite[Thm.~4.3.14]{Ran95} that $d\rho_\fre=d\om_\fre(\cdot,\infty)$. Furthermore, by \cite[Cor.~4.3.5]{Ran95}, all harmonic measures for $\Om$ are comparable, meaning that there exist constants $c_1(x_*)$ and $c_2(x_*)$ such that
\begin{align}
   c_1(x_*)\rho_\fre\le\om_\fre(\cdot,x_*)\le c_2(x_*)\rho_\fre.
\end{align}
It follows that a weight $w$ is of Szeg\H{o} class for $\fre$ if and only if $S(\fre,w,x_*)>0$ for one and hence all $x_*\in\Om$.

For complex sets $\fre$, the Widom factors $W_n(\fre,w,x_*)$ satisfy a weighted analog of Szeg\H{o}'s lower bound, namely
\begin{align}\label{wSzLB-res}
  W_n(\fre,w,x_*) \ge S(\fre,w,x_*), \quad n\ge1.
\end{align}
This bound is an immediate consequence of the weighted analog of the Bernstein--Walsh inequality \cite[Thm.~12]{NSZ21} stating that
\begin{align}
|P_n(z)| \le \|wP_n\|_\fre e^{ng_\fre(z,\infty)}/S(\fre,w,z), \quad z\in\Om,
\end{align}
whenever $\fre\subset\bbC$ is a compact set of positive capacity, $w$ a Szeg\H{o} class weight on $\fre$, and $P_n$ an arbitrary polynomials of degree $n$.

As is the case for Chebyshev polynomials, when $\fre\subset\bbR$ and $x_*\in\bbR\bs\fre$, there are better bounds for the residual polynomials. In the setting of unweighted residual polynomials, an improved lower bound and an upper bound analogous to \eqref{SchLB} and \eqref{UB} were derived in \cite{Sch11,CSZ5} for a modified version of the Widom factors. In the weighted setting, that modified version of the Widom factors does not appear to be suitable neither for a lower nor an upper bound. Instead, we will show that the analog of the upper bound \eqref{wUB-cheb} holds for the Widom factors $W_n(\fre, w, x_*)$ defined in \eqref{Wn-res}. Moreover, our analog of the lower bound \eqref{wLB-cheb} has an additional factor, as will be explained in Section~\ref{s2}.

Following ideas of \cite{ELY24}, we perform our analysis in the unified setting that covers both residual and Chebyshev polynomials. We derive non-asymptotic upper and lower bounds in Section~\ref{s2}, while Section~\ref{s3} presents the corresponding asymptotic bounds along with a Szeg\H{o}-type theorem.

Ed Saff was a pioneer in the modern theory of general Chebyshev polynomials.  We hope he enjoys this birthday bouquet that we are pleased to dedicate to him.

\section{Non-asymptotic Bounds}\label{s2}

Throughout this section, we assume that $\fre\subset\bbR$ is a compact set and $x_*$ a point in $\ol\bbR\bs\fre$, where $\ol\bbR:=\bbR\cup\{\infty\}$. In this setting, the $L^\infty(\fre,w)$ extremal polynomials $T_{n,w,x_*}$ are real (i.e., have real coefficients) and characterized by the following version of the alternation theorem.

For convenience of notation, we denote by $\calP_n(x_*)$ the set of polynomials $P$ of degree at most $n$ normalized at $x_*$ so that $P(x_*)=1$ if $x_*$ is finite and so that $P$ is monic of degree $n$ if $x_*=\infty$. We also denote by $(b,a)$, with $a<b$, the set $\ol\bbR\bs[a,b]$.

\begin{theorem}[Alternation Theorem]
  Let $\fre\subset\bbR$ be a compact set, $x_*$ a point in $\ol\bbR\bs\fre$, $w$ an upper semi-continuous weight function on $\fre$, and $P\in\calP_n(x_*)$ a real polynomial. Then $P=T_{n,w,x_*}$ if and only if there exist $n+1$ alternation points $x_1<x_2<\dots<x_{n+1}$ on $\fre$ such that
\begin{align}\label{alt-cond}
  w(x_j)P(x_j) = (-1)^{k_*-j}\sgn(x_*-x_j)\|wP\|_\fre, \quad j=1,\dots,n+1,
\end{align}
where $k_*\in\{1,\dots,n+1\}$ is the index for which $x_*\in(x_{k_*},x_{k_*+1})$, $x_{n+2}=x_1$, and $\sgn(\infty-x_j)=1$ for every $j$.
\end{theorem}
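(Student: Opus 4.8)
The plan is to run the classical equioscillation argument in both directions, tracking the factor $\sgn(x_*-x_j)$ and the normalization at $x_*$ simultaneously. Set $L=\|wP\|_\fre$; since $w>0$ at infinitely many points and $P\not\equiv0$, one has $L>0$. Because $w$ is upper semi-continuous and $|P|$ continuous, the product $w|P|$ is upper semi-continuous, so its maximum $L$ is attained on a nonempty compact \emph{extremal set} $E=\{x\in\fre:w(x)|P(x)|=L\}$. On $E$ we have $w>0$ and $P\ne0$, so $\sigma(x):=\sgn P(x)$ is continuous on $E$; put $\tilde\sigma(x):=\sigma(x)\,\sgn(x_*-x)$. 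Unwinding \eqref{alt-cond} shows that $x_1<\dots<x_{n+1}$ satisfy it precisely when each $x_j\in E$ and $\tilde\sigma(x_j)=(-1)^{k_*-j}$, i.e.\ when the $x_j$ lie in $E$ and $\tilde\sigma$ \emph{strictly alternates} along them (the index $k_*$ being forced by the position of $x_*$). Thus the theorem reduces to: $P=T_{n,w,x_*}$ iff $\tilde\sigma$ has an alternating chain of length $n+1$ in $E$.

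\emph{Sufficiency.} Suppose such a chain $x_1<\dots<x_{n+1}$ exists and that some $Q\in\calP_n(x_*)$ has $\|wQ\|_\fre<L$; set $R=P-Q$. At each $x_j$, $|w(x_j)Q(x_j)|<L=|w(x_j)P(x_j)|$ forces $\sgn R(x_j)=\sigma(x_j)$. In the Chebyshev case $\sgn(x_*-\cdot)\equiv1$, so $\sigma=\tilde\sigma$ alternates along the whole chain and $R$ has at least $n$ zeros in $(x_1,x_{n+1})$, impossible for $\deg R\le n-1$ unless $R\equiv0$. In the residual case $\sgn(x_*-x_j)$ reverses exactly once, so $\sigma$ alternates except across the single gap of the circle $\ol\bbR$ containing $x_*$; the sign-change zeros together with the zero forced by $R(x_*)=0$ — appearing either as a doubled zero inside that gap or as an extra zero outside $[x_1,x_{n+1}]$ — give at least $n+1$ zeros counted with multiplicity, impossible for $\deg R\le n$ unless $R\equiv0$. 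Either way $Q=P$, a contradiction.

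\emph{Necessity.} Assume $P=T_{n,w,x_*}$ but that the longest alternating chain of $\tilde\sigma$ in $E$ has length $m\le n$; then $\tilde\sigma$ switches sign at most $m-1\le n-1$ times as $x$ increases through $E$. Since $\tilde\sigma$ is locally constant on $E$, each switch occurs across a genuine gap of the closed set $E$, and I may choose separating points $\xi_1<\dots<\xi_{m-1}$ in $\bbR\setminus E$, one per gap, and a sign $c=\pm1$ so that $S(x)=c\prod_i(x-\xi_i)$ satisfies $\sgn S=\tilde\sigma$ on $E$. Define the correction $R(x)=(x_*-x)S(x)$ if $x_*$ is finite and $R=S$ if $x_*=\infty$. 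Then $\sgn R=\tilde\sigma\,\sgn(x_*-\cdot)=\sigma$ on $E$, while $R(x_*)=0$ and $\deg R\le n$ in the residual case, resp.\ $\deg R\le n-1$ in the Chebyshev case; in both cases $P-\delta R\in\calP_n(x_*)$ for every $\delta$.

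It remains to check that $\|w(P-\delta R)\|_\fre<L$ for some small $\delta>0$, contradicting minimality. If not, choose $\delta_k\downarrow0$ and maximizers $x_k\in\fre$ (attained, as $w|\cdot|$ is u.s.c.) with $w(x_k)|P(x_k)-\delta_kR(x_k)|\ge L$, and pass to $x_k\to x_\infty$. From $w(x_k)|P(x_k)|\ge L-\delta_k w(x_k)|R(x_k)|$ and upper semi-continuity, $w(x_\infty)|P(x_\infty)|=L$, so $x_\infty\in E$ and $w(x_k)|P(x_k)|\to L$. The decisive point, which tames the mere upper semi-continuity of $w$, is that since $|P(x_k)|\to|P(x_\infty)|>0$ the values $w(x_k)$ must converge to $L/|P(x_\infty)|>0$. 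As $\sgn R(x_\infty)=\sgn P(x_\infty)\ne0$ and $R,P$ are continuous, $P(x_k)$ and $R(x_k)$ eventually share a common sign, whence $w(x_k)|P(x_k)-\delta_kR(x_k)|=w(x_k)|P(x_k)|-\delta_k w(x_k)|R(x_k)|\le L-\delta_k c$ for some $c>0$ and all large $k$ — contradicting the lower bound $\ge L$. This produces the desired alternation points. The two places demanding genuine care are the bookkeeping of the degree and normalization budget for $R$ (which is exactly why the factor $(x_*-x)$ enters and why $n-1$ sign changes is the right threshold) and the upper semi-continuity of $w$, handled by the compactness argument just given.
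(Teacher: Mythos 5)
Your proof is correct and takes essentially the same route as the paper's: sufficiency by zero-counting on the difference with a hypothetically better competitor (the paper divides by $(x-x_*)$ to drop the degree and get $n$ sign changes, you instead keep $R=P-Q$ and count the parity-forced double zero in the gap containing $x_*$ --- the same estimate in different bookkeeping), and necessity by constructing a sign-matching corrector $(x_*-x)S$ from the at most $n-1$ sign switches on the extremal set and perturbing $P-\delta R$. Your sequential compactness argument justifying the strict norm decrease under a merely upper semi-continuous weight, and your remark that the orientation in \eqref{alt-cond} is forced, fill in details the paper leaves implicit, but the underlying argument is the same.
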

\begin{proof}
Suppose that $P\in\calP_n(x_*)$ is a real polynomial such that \eqref{alt-cond} holds.  If $P$ is not a norm minimizer, then $\|wT_{n,w,x_*}\|_\fre<\|wP\|_\fre$.  Consider the polynomial $Q(x)=P(x)-T_{n,w,x_*}(x)$ if $x_*=\infty$ and, otherwise,
\[
   Q(x)=\frac{P(x)-T_{n,w,x_*}(x)}{x-x_*}.
\]
It has degree at most $n-1$ and alternating signs at $x_1,\dots,x_{n+1}$, hence a zero in each of the intervals $(x_j,x_{j+1})$, $j=1,\dots,n$.  It follows that $Q$ is identically zero, a contradiction.  Thus $P$ is a norm minimizer.

Conversely, suppose $P\equiv T_{n,w,x_*}$ and $\sgn(x_*-x)P(x)$ has at most $n-1$ sign changes on the set of extreme points of $P$, that is,
\[
   \bigl\{x\in\fre:w(x)|P(x)|=\|wP\|_\fre\bigr\}.
\]
Then, by putting zeros in the right places, there exists a polynomial $Q_0$ of degree at most $n-1$ such that on the set of extreme points, $\sgn(Q_0(x))=\sgn(P(x))$ if $x_*=\infty$ and otherwise $\sgn(Q_0(x))=\sgn(P(x)/(x_*-x))$.  Let $Q(x):=Q_0(x)$ if $x_*=\infty$ and otherwise $Q(x):=(x_*-x)Q_0(x)$. Then $(P-\eps Q)\in\calP_n(x_*)$ and $\norm{P-\varepsilon Q}_\fre<\norm{P}_\fre$ for sufficiently small $\varepsilon>0$, contradicting the fact that $P$ is a norm minimizer.  Therefore, an alternating set exists.
\end{proof}

We call the connected components of $\ol\bbR\bs\fre$ the gaps of $\fre$ and refer to $\ol\bbR\bs\cvh(\fre)$ as the unbounded gap of $\fre$. In addition, we say that $T_{n,w,x_*}$ has a zero at $\infty$ if $\deg(T_{n,w,x_*})<n$.

Since the weight $w$ is necessarily positive at the alternation points, it follows from the intermediate value theorem that $T_{n,w,x_*}$ has a zero in $(x_j,x_{j+1})$ for each $j=1,\dots,k_*-1,k_*+1,\dots,n+1$.
This implies that $T_{n,w,x_*}$ has degree at least $n-1$, that all the zeros of $T_{n,w,x_*}$ are real and simple, and that $T_{n,w,x_*}$ has no zeros in $(x_{k_*},x_{k_*+1})$. In particular, if $x_*$ lies in the unbounded gap of $\fre$, then $\deg(T_{n,w,x_*})=n$. In addition, if $\deg(T_{n,w,x_*})<n$, then $T_{n,w,x_*}$ has no zeros in $(x_{n+1},x_1)$ and hence it has no zeros in the unbounded gap of $\fre$.

\begin{corollary}
The extremal polynomials $T_{n,w,x_*}$, and consequently the Widom factors $W_n(\fre,w,x_*)$, depend continuously on $x_*$ within each gap of $\fre$. In particular, \eqref{Wn-cont} holds.
\end{corollary}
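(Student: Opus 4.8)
The plan is to run the standard ``uniqueness plus normal families'' argument, exploiting that $T_{n,w,x_*}$ is the \emph{unique} minimizer (as recorded in the definition and confirmed by the Alternation Theorem) together with the structural facts already established: in the unbounded gap $\deg(T_{n,w,x_*})=n$, and in every case the zeros of $T_{n,w,x_*}$ coming from sign changes lie between consecutive alternation points and hence inside $\cvh(\fre)$. Fix a gap $G$ and a sequence $x_*^{(k)}\to x_\infty\in G$. Since the polynomials of degree $\le n$ form a finite-dimensional space, it suffices to show that every subsequence of $\{T_{n,w,x_*^{(k)}}\}$ has a further subsequence converging (coefficientwise) to $T_{n,w,x_\infty}$; the Widom factors then follow because $t_n(\fre,w,x_*)=\|wT_{n,w,x_*}\|_\fre$ depends continuously on the coefficients (the weight is bounded above by upper semicontinuity, so $P\mapsto\|wP\|_\fre$ is Lipschitz with respect to the coefficients) and $g_\fre(\cdot,\infty)$ is continuous on $\Om$.

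For a finite limit $x_\infty$ I would first bound the family uniformly. The explicit competitor $T_{n,w,x_\infty}/T_{n,w,x_\infty}(x_*^{(k)})\in\calP_n(x_*^{(k)})$, which is admissible for all large $k$ since $T_{n,w,x_\infty}(x_*^{(k)})\to T_{n,w,x_\infty}(x_\infty)=1$, gives $\limsup_k t_n(\fre,w,x_*^{(k)})\le t_n(\fre,w,x_\infty)$ and in particular a uniform bound on $t_n$ near $x_\infty$. Evaluating at $n+1$ points of $\fre$ where $w>0$ and using Lagrange interpolation then bounds the coefficients of $T_{n,w,x_*^{(k)}}$, so a subsequence converges to some $P_\infty$ of degree $\le n$. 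Passing to the limit in the normalization shows $P_\infty\in\calP_n(x_\infty)$, while norm continuity gives $\|wP_\infty\|_\fre=\lim_j t_n(\fre,w,x_*^{(k_j)})\le t_n(\fre,w,x_\infty)\le\|wP_\infty\|_\fre$, forcing equality. Thus $P_\infty$ is a minimizer for $x_\infty$, and uniqueness yields $P_\infty=T_{n,w,x_\infty}$; this identifies the subsequential limit and completes the finite case.

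The genuine obstacle is the transition to $x_*=\infty$ in the unbounded gap, where the normalization switches from $P(x_*)=1$ to monic; here $T_{n,w,x_*}$ itself tends to $0$ on $\fre$, so continuity must be read projectively. I would renormalize $M_{x_*}:=T_{n,w,x_*}/\ell_n(x_*)$, where $\ell_n(x_*)$ is the (nonzero) leading coefficient, making $M_{x_*}$ monic of degree $n$. Because all $n$ zeros $z_i(x_*)$ of $T_{n,w,x_*}$ stay in $\cvh(\fre)$ and $T_{n,w,x_*}(x_*)=1$, one has $\ell_n(x_*)=1/\prod_i(x_*-z_i(x_*))$ and hence $|x_*|^n\ell_n(x_*)\to1$ as $x_*\to\infty$; combining this with the Green-function asymptotics $g_\fre(z,\infty)=\log|z|-\log\ca(\fre)+o(1)$ gives $|\ell_n(x_*)|\,e^{ng_\fre(x_*,\infty)}\to\ca(\fre)^{-n}$. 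Writing $W_n(\fre,w,x_*)=|\ell_n(x_*)|\,e^{ng_\fre(x_*,\infty)}\,\|wM_{x_*}\|_\fre$, it remains to show $\|wM_{x_*}\|_\fre\to t_n(\fre,w)$.

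For this last step I would rerun the compactness argument in the space of \emph{monic} degree-$n$ polynomials: the $M_{x_*}$ have uniformly bounded coefficients (leading coefficient $1$, zeros in $\cvh(\fre)$), the competitor $T_{n,w}/T_{n,w}(x_*)$ yields $\limsup_{x_*\to\infty}\|wM_{x_*}\|_\fre\le t_n(\fre,w)$ after again using $|x_*|^n\ell_n(x_*)\to1$ and $|T_{n,w}(x_*)|\sim|x_*|^n$, and any coefficientwise subsequential limit $M_\infty$ is monic with $\|wM_\infty\|_\fre\le t_n(\fre,w)$, hence equals the monic minimizer $T_{n,w}$ by uniqueness. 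Therefore $\|wM_{x_*}\|_\fre\to t_n(\fre,w)$ and $W_n(\fre,w,x_*)\to\ca(\fre)^{-n}t_n(\fre,w)=W_n(\fre,w)$, which is \eqref{Wn-cont}. The only delicate point throughout is this renormalization bookkeeping at infinity; the interpolation bound, the norm continuity, and the use of uniqueness are all routine.
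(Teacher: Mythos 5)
Your proof is correct, but it takes a genuinely different and much heavier route than the paper's, whose entire argument is one line: by the Alternation Theorem, the extremality condition depends on $x_*$ only through which gap contains it, so for $x$ and $x_*$ in the same gap the rescaled polynomial $T_{n,w,x_*}(z)/T_{n,w,x_*}(x)$ --- well defined since $T_{n,w,x_*}$ has no zeros in the gap containing $x_*$ --- is again extremal, i.e.\ $T_{n,w,x}(z)=T_{n,w,x_*}(z)/T_{n,w,x_*}(x)$. Thus within each gap the extremal polynomial is a \emph{fixed} polynomial up to a scalar, so the dependence on $x_*$ is not merely continuous but explicit, and \eqref{Wn-cont} follows by letting $x\to\infty$ in the unbounded gap (this identity applied with $x_*=\infty$ shows the monic rescaling is exactly $T_{n,w}$), combined with the same Green function asymptotics $e^{ng_\fre(x,\infty)}\sim |x|^n\ca(\fre)^{-n}$ you use. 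Your compactness-plus-uniqueness scheme replaces this rigidity with soft analysis, and all the steps check out: the Lagrange interpolation bound is legitimate because $w$ is assumed positive at infinitely many points of $\fre$; the competitor estimates and identification of subsequential limits via uniqueness are sound; and the renormalization bookkeeping at infinity is justified by the facts, established in the paper after the Alternation Theorem, that $\deg(T_{n,w,x_*})=n$ for $x_*$ in the unbounded gap and that all zeros then lie in $\cvh(\fre)$. What your approach buys is robustness --- it uses only uniqueness of the minimizer, so it would survive in settings with no alternation characterization (e.g.\ complex $\fre$ or non-real $x_*$), and it makes fully explicit the monic renormalization at infinity that the paper leaves implicit; what it costs is length and the structural insight the paper's proof delivers for free, namely that your $M_{x_*}$ is in fact \emph{independent} of $x_*$ throughout the unbounded gap and equals $T_{n,w}$ exactly, so that no limiting argument is needed at that step at all.
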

\begin{proof}
It follows from the alternation theorem that if $x$ and $x_*$ lie in the same gap of $\fre$, then $T_{n,w,x}(z)=T_{n,w,x_*}(z)/T_{n,w,x_*}(x)$ and hence also $W_n(\fre,w,x)=W_n(\fre,w,x_*)/T_{n,w,x_*}(x)$.
\end{proof}

Next, we consider weights of the form
\begin{align}\label{wRat-res}
  w_0(x)=\frac1{|P_m(x)|},
\end{align}
where $P_m(x)=c\prod_{j=1}^m(x-c_j)$ is a real polynomial of degree $m$ with zeros outside of $\fre$. We note that $P_m$ is assumed to have real coefficients but not necessarily real zeros. Given such a weight $w_0$, we define
\begin{align}
  n_0:=2(m+1)-\deg(T_{m+1,w,x_*}) \in \{m+1,m+2\}
\end{align}
and introduce, for $n\ge n_0$, the following rational function
\begin{align}\label{R-res}
  R_n(z) = \pm\frac{T_{n,w,x_*}(z)}{P_m(z)}.
\end{align}
We choose the sign such that $R_n(x_*)>0$ if $x_*$ is not a pole of $R_n$ and otherwise such that the leading coefficient of the principal part of the Laurent series of $R_n$ at $x_*$ is positive, that is,
\begin{align}
\begin{split}
   \lim_{z\to x_*}R_n(z)(z-x_*)^{\ord(R_n,x_*)}>0 
   \;\text{ if }\; x_*\neq\infty,\\
   \lim_{z\to x_*}R_n(z)z^{-\ord(R_n,x_*)}>0 
   \;\text{ if }\; x_*=\infty,
\end{split}
\end{align}
where $\ord(R_n,x_*)$ denotes the order of the pole of $R_n$ at $x_*$.

The above choice of $n_0$ guarantees that $R_n$ has a pole at infinity for all $n\ge n_0$. By construction, we also have that
\begin{align}
   \|R_n\|_\fre=\|wT_{n,w,x_*}\|_\fre=t_n(\fre,w,x_*).
\end{align}
It might happen that the numerator and denominator in \eqref{R-res} have common zeros. Without loss of generality, we assume that $\{c_j\}_{j=1}^{r_n}$, $r_n\le m$, are the poles of $R_n$ in $\bbC$, repeated according to their multiplicity, and $\{c_j\}_{j=r_n+1}^m$ are the zeros of $T_{n,w,x_*}$ that coincide with the zeros of $P_m$. For future use, we also define
\begin{align}
  d_n := \deg(R_n)
\end{align}
and note that it follows from \eqref{R-res} and the definitions of $r_n$ and $d_n$ above that
\begin{align}\label{dn-rn}
  d_n-r_n = \deg(T_{n,w,x_*})-m \le n-m,
\end{align}
since the quantity on the left-hand side equals the order of the pole of $R_n$ at $\infty$.

In what follows, an important role will be played by the compact set
\begin{align}\label{En-res}
  \fre_n := \big\{z\in\bbC \st R_n(z) \in [-t_n(\fre,w,x_*),t_n(\fre,w,x_*)] \big\}.
\end{align}
By construction, we clearly have that $\fre\subset\fre_n$. Using the alternation theorem, we can derive several important properties of the set $\fre_n$.

\begin{theorem}\label{En-thm}
Let $\fre\subset\bbR$ be a compact set and suppose $n\ge n_0$. Then there are points $\al_1<\be_1\le\al_2<\dots<\be_j\le\al_{j+1}<\dots<\be_{d_n}$ in $\bbR$ such that
\begin{align}\label{EnBands-res}
  \fre_n=\bigcup_{j=1}^{d_n}[\al_j,\be_j],
\end{align}
and $R_n$ maps each of the intervals $[\al_j,\be_j]$ bijectively onto $[-t_n(\fre,w,x_*),t_n(\fre,w,x_*)]$. In addition,
\begin{itemize}
\item[$(i)$]
each gap of $\fre$ may intersect at most one band $[\al_j,\be_j]$ of $\fre_n$,
\item[$(ii)$]
the gaps of $\fre$ containing $x_*$ and $\{c_j\}_{j=r_n+1}^m$ do not intersect $\fre_n$, and
\item[$(iii)$]
if $\deg(T_{n,w,x_*})<n$, then the unbounded gap of $\fre$ does not intersect $\fre_n$.
\end{itemize}
\end{theorem}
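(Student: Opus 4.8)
The plan is to transfer the information in the alternation theorem onto the real level sets of $R_n$ and then close the argument with a degree count. Write $R_n=A_n/B_n$ in lowest terms, so that $\deg A_n=d_n$, $\deg B_n=r_n$, and $d_n>r_n$ because $R_n$ has a pole at infinity. For real $x$ that is not a pole, $x\in\fre_n$ exactly when $G(x):=t_n^2 B_n(x)^2-A_n(x)^2\ge0$; here $G$ is a real polynomial of degree $2d_n$ with negative leading coefficient, and its zeros are precisely the solutions of $R_n=\pm t_n$. Each of the two equations $A_n\mp t_n B_n=0$ has exactly $d_n$ roots in $\bbC$, so the real-interval structure \eqref{EnBands-res} will follow once I show that all $2d_n$ roots of $G$ are real (the tangencies where adjacent bands touch accounting for the double roots) and that $\fre_n$ contains no non-real points.

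The engine is the sign pattern of $R_n$ at the alternation points. Since $w=1/|P_m|$, condition \eqref{alt-cond} gives $R_n(x_j)=\sigma\,\sgn(P_m(x_j))\,(-1)^{k_*-j}\sgn(x_*-x_j)\,t_n$, where $\sigma=\pm1$ is the sign fixed in \eqref{R-res}. Hence along the cyclic list $x_1<\dots<x_{n+1}$ (with $x_{n+2}=x_1$ through $\infty$) the sign of $R_n$ flips from $x_j$ to $x_{j+1}$ unless the pair either straddles $x_*$ or straddles an odd number of odd-order poles of $R_n$ (real, odd-order zeros of $P_m$). I would call a cyclic interval $(x_j,x_{j+1})$ \emph{active} if it carries a genuine zero of $R_n$, i.e.\ a non-cancelled zero of $T_{n,w,x_*}$. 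By the remarks following the alternation theorem there is exactly one zero of $T_{n,w,x_*}$ in each cyclic interval except the one containing $x_*$; among these, the cancelled zeros $\{c_j\}_{j=r_n+1}^m$ and (when $\deg T_{n,w,x_*}<n$) the zero at infinity are the non-genuine ones. Counting the $d_n$ genuine zeros of $R_n$ then shows there are exactly $d_n$ active intervals.

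On an active interval, a branch-by-branch intermediate-value argument—tracking $R_n$ between consecutive interior poles and using that its single genuine zero forces $R_n$ to sweep across $0$ while the endpoint values $\pm t_n$ (or an adjacent pole sending $R_n$ to $\mp\infty$) supply the extremes—shows that $R_n$ attains every value of $(-t_n,t_n)$ at least once. Consequently each $s\in(-t_n,t_n)$ has at least $d_n$ real preimages; since $R_n$ has exactly $d_n$ preimages of any value in $\ol\bbC$, it has exactly $d_n$, all real and simple, one in each active interval. In particular the open non-active intervals contain no preimage of any interior value, so they avoid $\fre_n$. Letting $s$ sweep $(-t_n,t_n)$ yields $\fre_n\subset\bbR$ and shows that $R_n$ maps the closure of each active interval homeomorphically, hence strictly monotonically, onto $[-t_n,t_n]$; these closures are the $d_n$ bands $[\al_j,\be_j]$, and adjacent bands meet exactly at the common critical points where $R_n=\pm t_n$, which produces \eqref{EnBands-res}.

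Items $(i)$–$(iii)$ then follow cheaply. Every gap of $\fre$ is $\fre$-free, hence contains no alternation point and so lies inside a single cyclic interval; since each active interval has closure equal to one band while the open non-active intervals avoid $\fre_n$, a gap can meet at most one band, giving $(i)$. The gap of $x_*$ and the gaps of the real cancelled points $\{c_j\}_{j=r_n+1}^m$ lie in non-active intervals (their unique zero of $T_{n,w,x_*}$ is, respectively, absent or cancelled), so they miss $\fre_n$, giving $(ii)$; and when $\deg T_{n,w,x_*}<n$ the unbounded cyclic interval is non-active (its allotted zero is the one at infinity), so the unbounded gap misses $\fre_n$, giving $(iii)$. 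The main obstacle is the reality/counting core of the third paragraph: proving that each active interval supplies \emph{at least} one preimage of every interior value despite the poles of $R_n$ that may sit inside it, and pinning down that there are \emph{exactly} $d_n$ active intervals. Once this count matches the degree, the degree bound forces all preimages to be real and each band to be a single monotone sheet, and everything else is bookkeeping.
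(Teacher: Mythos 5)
Your proposal is correct and follows essentially the same route as the paper: localize the $d_n$ simple real zeros of $R_n$ in distinct alternation intervals, use the intermediate value theorem to show each such interval covers $(-t_n(\fre,w,x_*),t_n(\fre,w,x_*))$, then invoke the fact that the degree-$d_n$ rational function $R_n$ attains every value exactly $d_n$ times to force a single monotone band per interval and to exclude any other (in particular non-real) points of $\fre_n$, with $(i)$--$(iii)$ read off from which cyclic intervals carry no genuine zero. Your auxiliary polynomial $G=t_n^2B_n^2-A_n^2$ and the branch-by-branch pole tracking are cosmetic variants of the paper's device of defining $\al_j,\be_j$ as the level-$t_n$ points nearest the zero $z_j$ (which automatically excludes interior poles), and your phrase ``the closure of each active interval'' should more precisely be the closure of the component of $\{|R_n|<t_n(\fre,w,x_*)\}$ containing the zero, since the band may be a proper subinterval of the active interval.
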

\begin{proof}
Let $x_1<\dots<x_{n+1}$ be $x_*$-alternation points on $\fre$ for the residual polynomial $T_{n,w,x_*}$.
Since every zero of $R_n$ is a zero of $T_{n,w}$, all $d_n$ zeros of $R_n$ are simple, real, and situated in $d_n$ distinct sets $I_j=(x_{k_j},x_{k_j+1})$, $j=1,\dots,d_n$. In each closed set $\ol{I}_j$, let $z_j$ be the zero of $R_n$ and $\al_j,\be_j$ the points closest to $z_j$ satisfying
\[
   \al_j<z_j<\be_j \; \mbox{ and } \; |R_n(\al_j)|=|R_n(\be_j)|=t_n(\fre,w,x_*).
\]
Then $R_n$ has no poles but only a single simple zero in $(\al_j,\be_j)$. This implies that $R_n(\al_j)$ and $R_n(\be_j)$ are of opposite sign and hence, by the intermediate value theorem, $R_n$ maps each $(\al_j,\be_j)$ onto $(-t_n(\fre,w,x_*),t_n(\fre,w,x_*))$. Since $R_n$ takes every value $d_n$ times, $R_n$ is one-to-one on each $(\al_j,\be_j)$, $j=1,\dots,d_n$. Thus, $R_n$ maps each interval $[\al_j,\be_j]$ bijectively onto $[-t_n(\fre,w,x_*),t_n(\fre,w,x_*)]$ and \eqref{EnBands-res} holds. Additionally, we can rearrange the bands in increasing order by relabeling them if necessary.

By construction, each $[\be_j,\al_{j+1}]$ as well as $[\be_{d_n},\al_1]$ contains at least one of the $x_*$-alternation points. Hence, a gap of $\fre$ can not  intersect two bands of $\fre_n$. Finally, due to cancelation, $R_n$ does not have zeros at $\{c_j\}_{j=r_n+1}^m$ so the intervals $(x_{k_j},x_{k_j+1})$ containing $\{c_j\}_{j=r_n+1}^m$, and hence the gaps of $\fre$ containing $\{c_j\}_{j=r_n+1}^m$, do not intersect $\fre_n$. Since $T_{n,w,x_*}$ and hence also $R_n$ do not have zeros in the interval $(x_{k_*},x_{k_*+1})\ni x_*$, the gap of $\fre$ containing $x_*$ does not intersect $\fre_n$ as well.
\end{proof}

Recall that the Green function of the domain $\ol\bbC\bs\fre_n$ with a logarithmic pole at $z_0$ is denoted $g_{\fre_n}(z,z_0)$. We will work with its complexified exponential variant given by
\begin{align}
  B_{\fre_n}(z,z_0)=\exp[-g_{\fre_n}(z,z_0)-\wti g_{\fre_n}(z,z_0)],
\end{align}
where $\wti g_{\fre_n}(z,z_0)$ is a harmonic conjugate of $g_{\fre_n}(z,z_0)$. In general, $B_{\fre_n}(\cdot,z_0)$ is a multiple-valued analytic character-automorphic function on $\ol\bbC\bs\fre_n$. However, on $\ol\bbC\bs(x_{k_*},x_{k_*+1})$, it is single-valued, and we normalize its phase by imposing the condition  $B_{\fre_n}(x_*,z_0)>0$ for $z_0\neq x_*$, and, in the case $z_0 = x_*$, by requiring that
 $\lim_{z\to x^*}B_{\fre_n}(z,x^*)/|z-x^*|>0$.

The key ingredient for our results is the Blaschke-type function defined by
\begin{align}\label{Bn-res}
  B_n(z) := B_{\fre_n}(z,\infty)^{d_n-r_n}\prod_{j=1}^{r_n}B_{\fre_n}(z,c_j)
\end{align}
and its connection to the rational function $R_n$, as outlined in the following theorem.

\begin{theorem}
On the domain $\ol\bbC\bs\fre_n$, the function $B_n$ is single-valued and
\begin{align}\label{RnBn-res}
  R_n(z) = \frac{t_n(\fre,w,x_*)}2\bigg(B_n(z)+\frac1{B_n(z)}\bigg).
\end{align}
\end{theorem}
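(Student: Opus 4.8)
The plan is to realize $R_n/t_n$ as a Joukowski image of $B_n$. Writing $t_n:=t_n(\fre,w,x_*)$ and $F:=R_n/t_n$, set
\[
  \phi := F + \sqrt{F^2-1},
\]
with the branch of the square root chosen so that $|\phi|\ge1$ on $\Om_n:=\ol\bbC\bs\fre_n$; then $1/\phi=F-\sqrt{F^2-1}$ and $\tfrac12(\phi+1/\phi)=F$. Since the target identity $R_n=\tfrac{t_n}{2}(B_n+1/B_n)$ is symmetric under $B_n\mapsto1/B_n$, the whole theorem reduces to identifying $B_n$ with $1/\phi$; this identification will simultaneously deliver the single-valuedness of $B_n$.

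First I would establish that $\phi$ is single-valued on $\Om_n$. The branch points of $\sqrt{F^2-1}$ are the odd-order zeros of $F^2-1$. By Theorem~\ref{En-thm}, $R_n$ maps each band $[\al_j,\be_j]$ bijectively onto $[-t_n,t_n]$ with endpoints of opposite sign, so $F=\pm1$ holds at exactly the $2d_n$ band endpoints, each a simple zero of $F^2-1$; as $F$ has degree $d_n$, these exhaust the solutions of $F=\pm1$. The poles of $F$ (order $d_n-r_n$ at $\infty$, simple at the $c_j$) are even-order poles of $F^2-1$ and hence not branch points. Thus all $2d_n$ branch points are real band endpoints, two per band, and any closed loop in $\Om_n$ encircles them in pairs, so $\sqrt{F^2-1}$ and therefore $\phi$ are single-valued.

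Next I would pin down the modulus. On $\fre_n$ we have $F\in[-1,1]$, so $\sqrt{F^2-1}$ is purely imaginary and $|\phi|=1$; near a pole of $F$ one has $\phi\sim2F$, so $\log|\phi|$ has logarithmic singularities matching
\[
  g:=(d_n-r_n)\,g_{\fre_n}(\cdot,\infty)+\sum_{j=1}^{r_n} g_{\fre_n}(\cdot,c_j).
\]
The difference $\log|\phi|-g$ is harmonic on $\Om_n$ away from the poles, extends harmonically across them since the singularities cancel, and vanishes on $\fre_n$, which is regular as a finite union of nondegenerate intervals; the maximum principle forces $\log|\phi|\equiv g$. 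Hence $|B_n|=e^{-g}=1/|\phi|$ on $\Om_n$.

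Finally I would identify $B_n$ and normalize. The product $H:=B_n\,\phi$ has constant modulus $1$ by the previous step, and it is analytic and nonvanishing on $\Om_n$: the zeros of $B_n$ at $\infty$ (order $d_n-r_n$) and at the $c_j$ (simple) cancel exactly against the poles of $\phi$ there. An analytic function of constant modulus on the connected set $\Om_n$ is locally constant, hence has trivial monodromy and equals a single unimodular constant $c$; in particular $B_n=c/\phi$ is single-valued. To see $c=1$, I compare the prescribed normalizations at $x_*$: with $R_n$ chosen so that $F(x_*)>0$ (resp.\ positive leading Laurent coefficient) and $B_n$ normalized via the conditions on $B_{\fre_n}(\cdot,z_0)$, both $B_n$ and $1/\phi=F-\sqrt{F^2-1}$ are positive (resp.\ have matching positive leading behavior) at $x_*$, forcing $c=1$. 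Then $1/B_n=\phi$ and $\tfrac{t_n}{2}(B_n+1/B_n)=\tfrac{t_n}{2}(1/\phi+\phi)=t_nF=R_n$. The main obstacle is the single-valuedness: it rests on the branch-point bookkeeping for $\phi$, and equivalently on the vanishing of the character of $B_n$, which only becomes transparent once $|B_n|=1/|\phi|$ is established; the phase matching at $x_*$ is routine but must be checked separately in the cases $x_*$ finite or infinite and $x_*$ a pole or non-pole of $R_n$.
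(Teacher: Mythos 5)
Your argument is correct and is essentially the paper's own proof: your $\phi=F+\sqrt{F^2-1}$ is exactly $1/\Psi_n$, where $\Psi_n=J^{-1}(R_n/t_n)$ is the inverse Joukowsky map the paper uses, and your identification $\log|\phi|\equiv(d_n-r_n)g_{\fre_n}(\cdot,\infty)+\sum_{j=1}^{r_n}g_{\fre_n}(\cdot,c_j)$ followed by the unimodular-constant and normalization-at-$x_*$ step is the paper's maximum-principle argument that $|\Psi_n|=|B_n|$, hence $\Psi_n=cB_n$ with $c=1$. The differences are cosmetic — you establish single-valuedness by branch-point counting where the paper gets it for free from single-valuedness of $J^{-1}$ on $\ol\bbC\setminus[-1,1]$ — and the one small slip, calling the poles at the $c_j$ simple, is harmless since the paper lists the $c_j$ with multiplicity and your singularity bookkeeping already accommodates that.
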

\begin{proof}
Denote by $J(z)=\frac12(z+1/z)$ the Joukowsky map, and recall that $J$ is a conformal mapping from $\bbD$ onto $\ol\bbC\bs[-1,1]$. It follows directly from the definition of the set $\fre_n$ in \eqref{En-res} that the function $\Psi_n(z):=J^{-1}(R_n(z)/t_n(\fre,w,x_*))$ maps $\ol\bbC\bs\fre_n$ onto $\bbD$, and $\lim_{z\to x}|\Psi(z)|=1$ for every $x\in\fre_n$. Moreover, it easy to see that $\{c_j\}_{j=1}^{r_n}$ account for all the zeros of $\Psi_n$. It thus follows from the maximum principle that $|\Psi_n|=|B_n|$, and hence $\Psi_n=cB_n$ on $\bbC\bs\fre_n$ for some unimodular constant $c$. Our choice of normalization of $B_{\fre_n}$ and $R_n$ at $x_*$ implies that $c=1$ and hence $R_n(z)=t_n(\fre,w,x_*)J(B_n(z))$ which is precisely \eqref{RnBn-res}.
\end{proof}

An important consequence of the above relation is the following constraint for the harmonic measures $\om_{\fre_n}(\cdot,z_0)$ of the bands of $\fre_n$ and the gaps of $\fre$.
\begin{corollary}
Let $I_\ell=[\al_\ell,\be_\ell]$ be a band of $\fre_n$ as in Theorem~\ref{En-thm}. Then
\begin{align}\label{om-id-res}
  (d_n-r_n)\om_{\fre_n}(I_\ell,\infty)+\sum_{j=1}^{r_n} \om_{\fre_n}(I_\ell,c_j) = 1.
\end{align}
In particular, since every gap $K$ of $\fre$ intersects at most one band $I_\ell$ of $\fre_n$, we have
\begin{align}\label{om-est-res}
  (d_n-r_n)\om_{\fre_n}(K,\infty)+\sum_{j=1}^{r_n} \om_{\fre_n}(K,c_j) \le 1.
\end{align}
\end{corollary}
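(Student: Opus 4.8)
The plan is to read off the identity \eqref{om-id-res} from the representation \eqref{Bn-res} together with the standard link between harmonic measure and the periods of the (multivalued) harmonic conjugate of the Green function, and then to obtain \eqref{om-est-res} as an immediate corollary. Write
\[
  N_\ell := (d_n-r_n)\om_{\fre_n}(I_\ell,\infty)+\sum_{j=1}^{r_n}\om_{\fre_n}(I_\ell,c_j)
\]
for the left-hand side of \eqref{om-id-res}; the whole point is to show that $N_\ell=1$ for every band $I_\ell$. I would first record the monodromy of the individual Blaschke factors. Since the boundary components of $\ol\bbC\bs\fre_n$ are exactly the bands $I_1,\dots,I_{d_n}$, and since the period of the harmonic conjugate of $g_{\fre_n}(\cdot,z_0)$ around a boundary component equals $2\pi$ times the harmonic measure of that component, analytically continuing $B_{\fre_n}(\cdot,z_0)=\exp[-g_{\fre_n}(\cdot,z_0)-\wti g_{\fre_n}(\cdot,z_0)]$ once around $I_\ell$ multiplies it by the unimodular factor $\exp[-2\pi i\,\om_{\fre_n}(I_\ell,z_0)]$.

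Multiplying these factors according to \eqref{Bn-res}, the analytic continuation of $B_n$ around $I_\ell$ produces the factor $\exp[-2\pi i N_\ell]$. By the preceding theorem, $B_n$ is single-valued on $\ol\bbC\bs\fre_n$, so this monodromy factor must be trivial; hence $N_\ell\in\bbZ$ for each $\ell$. This is the integrality half of the argument, and I expect the careful bookkeeping here---fixing the orientation of the loop and the normalization that makes the period of $\wti g_{\fre_n}(\cdot,z_0)$ equal to $+2\pi\,\om_{\fre_n}(I_\ell,z_0)$---to be the main obstacle, since the remaining steps are soft.

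To upgrade ``integer'' to ``exactly $1$'' I would use a counting argument. Each $N_\ell$ is a sum of strictly positive harmonic measures in which the coefficient $d_n-r_n\ge1$ of $\om_{\fre_n}(I_\ell,\infty)$ is nonzero (recall from \eqref{dn-rn} and the choice of $n_0$ that $R_n$ has a pole at infinity of order $d_n-r_n$), so $N_\ell>0$ and therefore $N_\ell\ge1$. On the other hand, summing over all $d_n$ bands and using that for each fixed pole $z_0$ the harmonic measure is a probability distribution over the bands, namely $\sum_{\ell=1}^{d_n}\om_{\fre_n}(I_\ell,z_0)=1$, gives
\[
  \sum_{\ell=1}^{d_n} N_\ell
  = (d_n-r_n)\sum_{\ell=1}^{d_n}\om_{\fre_n}(I_\ell,\infty)
  + \sum_{j=1}^{r_n}\sum_{\ell=1}^{d_n}\om_{\fre_n}(I_\ell,c_j)
  = (d_n-r_n)+r_n = d_n .
\]
Thus the $d_n$ positive integers $N_1,\dots,N_{d_n}$ sum to $d_n$, forcing $N_\ell=1$ for every $\ell$, which is precisely \eqref{om-id-res}.

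Finally, \eqref{om-est-res} follows at once. Given a gap $K$ of $\fre$, Theorem~\ref{En-thm}$(i)$ says $K$ meets at most one band, say $I_\ell$ (and if it meets none, the left-hand side of \eqref{om-est-res} vanishes and the bound is trivial). Since each harmonic measure $\om_{\fre_n}(\cdot,z_0)$ is supported on $\fre_n=\bigcup_{j}I_j$, we have $\om_{\fre_n}(K,z_0)=\om_{\fre_n}(K\cap I_\ell,z_0)\le\om_{\fre_n}(I_\ell,z_0)$ for every pole $z_0\in\{\infty,c_1,\dots,c_{r_n}\}$. Substituting these inequalities into the left-hand side of \eqref{om-est-res} and applying the identity \eqref{om-id-res} already established for the band $I_\ell$ yields the desired bound $\le1$.
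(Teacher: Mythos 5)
Your proof is correct, but it takes a genuinely different route from the paper's. The paper argues locally, band by band: since $R_n$ maps $I_\ell$ bijectively onto $[-t_n(\fre,w,x_*),t_n(\fre,w,x_*)]$ and the Joukowsky map sends each half of the unit circle onto $[-1,1]$, the relation \eqref{RnBn-res} forces $\arg B_n$ to change by exactly $\pi$ along $I_\ell$; by the Cauchy--Riemann equations this increment equals $\int_{I_\ell}\frac{\pd g_n}{\pd n}\,dx$ with $g_n=(d_n-r_n)g_{\fre_n}(\cdot,\infty)+\sum_{j=1}^{r_n} g_{\fre_n}(\cdot,c_j)$, and the identity $\om_{\fre_n}(x,c)=\frac1\pi\frac{\pd g_{\fre_n}(x,c)}{\pd n}$ then yields \eqref{om-id-res} directly, one band at a time. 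You instead argue globally: single-valuedness of $B_n$ (from the preceding theorem) forces each period $N_\ell$ to lie in $\bbZ$; positivity of the harmonic measures of the nondegenerate bands together with $d_n-r_n\ge1$ (the order of the pole of $R_n$ at infinity, guaranteed by the choice of $n_0$) gives $N_\ell\ge1$; and the fact that each $\om_{\fre_n}(\cdot,z_0)$ is a probability measure on $\fre_n$ makes $\sum_\ell N_\ell=d_n$, pinning every $N_\ell$ at $1$. A pleasant byproduct of your route is that the orientation and sign bookkeeping you flagged as the main obstacle is in fact immaterial: integrality is all you extract from the monodromy, and $\bbZ$ is symmetric under sign change. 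What you trade away is locality: the paper identifies each band's mass as $1$ without reference to the other bands, whereas your counting needs all $d_n$ bands simultaneously (and uses the nondegeneracy $\al_\ell<\be_\ell$ from Theorem~\ref{En-thm} to ensure $\om_{\fre_n}(I_\ell,z_0)>0$); conversely, you avoid normal derivatives of the Green function on the band interiors, resting only on the standard period formula for $\wti g_{\fre_n}(\cdot,z_0)$ around a boundary component. Your derivation of \eqref{om-est-res} from \eqref{om-id-res} coincides with the paper's.
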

\begin{proof}
By Theorem~\ref{En-thm}, $R_n(x)/t_n(\fre,w,x_*)$ maps $I_\ell$ monotonically onto $[-1,1]$. Since $J(z)$ maps the upper (resp. lower) half of the unit circle bijectively onto $[-1,1]$, it follows from \eqref{RnBn-res} that
\begin{align}
  \bigl|\arg B_n(\be_\ell)-\arg B_n(\al_\ell)\bigr|=\pi.
\end{align}
By the Cauchy--Riemann equations, we have
\begin{align}
  \arg B_n(\be_\ell)-\arg B_n(\al_\ell) = \int_a^b\frac{\pd g_n(x)}{\pd n}dx,
\end{align}
where
\begin{align}
  g_n(z) = -\log|B_n(z)| = (d_n-r_n)g_{\fre_n}(z,\infty)+\sum_{j=1}^{r_n}g_{\fre_n}(z,c_j).
\end{align}
Finally, since $\om_{\fre_n}(x,c) = \frac1\pi\frac{\pd g_{\fre_n}(x,c)}{\pd n}$, \eqref{om-id-res} follows from these last three identities.
\end{proof}
\begin{remark}
The above results are essentially contained in \cite[Section~2]{ELY24}, where they are stated for Chebyshev rational functions with real poles.
\end{remark}

Next, we introduce the Parreau--Widom constant of a compact set $\fre\subset\bbR$ relative to the point $x_*\in\ol\bbR\bs\fre$. This constant is defined as
\begin{align}\label{PW-RES}
  PW(\fre,x_*):=\sum_{\ell} g_\fre(\ze_\ell,x_*),
\end{align}
where $\{\ze_\ell\}$ are the critical points of $g_\fre(x,x_*)$ located in the gaps of $\fre$, that is, the points at which $\pd_x g_\fre(\ze_\ell,x_*)=0$. Recall that the derivative at infinity given by
\[
   \pd_x g_\fre(\infty,x_*)=\lim_{x\to\infty}x \bigl[g_\fre(x,x_*)-g_\fre(\infty,x_*)\bigr].
\]
As before, we say that $\fre$ is a Parreau--Widom set if $PW(\fre,x_*)<\infty$. It is known \cite[Chap.V]{Has83} that the Parreau--Widom condition is independent of the choice of $x_*$.

We note the following useful formula for the Szeg\H{o} factor introduced in \eqref{S-res}.
\begin{lemma}\label{Sw-pol-lem}
Let $\fre\subset\bbR$ be a compact set of positive capacity, $x_*$ a point in $\ol\bbR\bs\fre$, and $w_0$ a weight of the form \eqref{wRat-res}. If $x_*$ is finite and $P_m(x_*)\neq0$, then
\begin{align}\label{Sw-pol}
  S(\fre,w_0,x_*) = \frac1{|P_m(x_*)|}\exp\bigg[mg_\fre(x_*,\infty)-\sum_{j=1}^m g_\fre(x_*,c_j)\bigg].
\end{align}
If, on the other hand, $x_*=\infty$ or $P_m(x_*)=0$, then the Szeg\H{o} factor can be obtained from the above formula by taking the limit of $S(\fre,w_0,x)$ as $x\to x_*$. 
\end{lemma}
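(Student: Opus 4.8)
The plan is to reduce everything to a single potential-theoretic identity and then read off the formula by summing over the zeros of $P_m$. The first observation is that, since $\fre\subset\bbR$, the complement $\Om=\ol\bbC\bs\fre$ is connected, so $g_\fre=g_\Om$ and every zero $c_j$ of $P_m$ (real or complex) lies in $\Om$; there is thus no issue with the Green function having to be defined across different components, and all the quantities in \eqref{Sw-pol} make sense.

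The key step is the identity, valid for any finite $x_*\in\Om$ and any $c\in\Om$,
\[
  \int\log|z-c|\,d\om_\fre(z,x_*) = \log|x_*-c| + g_\fre(x_*,c) - g_\fre(x_*,\infty).
\]
I would prove this by introducing the auxiliary function $v(z):=g_\fre(z,c)+\log|z-c|-g_\fre(z,\infty)$ on $\Om$. The logarithmic pole of $g_\fre(\cdot,c)$ at $c$ cancels the singularity of $\log|z-c|$, while the logarithmic growth of $g_\fre(\cdot,\infty)$ at $\infty$ cancels that of $\log|z-c|$; hence $v$ is bounded and harmonic on all of $\Om$. Because $\fre$ has positive capacity, $g_\fre(\cdot,c)$ and $g_\fre(\cdot,\infty)$ vanish quasi-everywhere on $\partial\Om$, so $v=\log|z-c|$ there. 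As harmonic measure does not charge polar sets, the representation $v(x_*)=\int v\,d\om_\fre(\cdot,x_*)$ gives the identity upon evaluating $v(x_*)$.

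With the identity in hand, the computation is routine: writing $\log w_0=-\log|c|-\sum_{j=1}^m\log|z-c_j|$ and integrating each term against $d\om_\fre(\cdot,x_*)$, the contributions $-g_\fre(x_*,\infty)$ add up to $+m\,g_\fre(x_*,\infty)$, while $-\log|c|-\sum_j\log|x_*-c_j|$ recombines into $-\log|P_m(x_*)|$. Exponentiating yields exactly \eqref{Sw-pol} for finite $x_*$ with $P_m(x_*)\neq0$.

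For the limiting cases I would argue by continuity. Since the $c_j$ lie off the compact set $\fre$, the function $\log w_0$ is bounded and continuous on $\fre$, so $x\mapsto\int\log w_0\,d\om_\fre(\cdot,x)$ is the Dirichlet solution with this boundary data and is therefore harmonic, hence continuous, on $\ol\bbR\bs\fre$; thus $S(\fre,w_0,\cdot)$ is continuous. It then suffices to check that the right-hand side of \eqref{Sw-pol} extends continuously to the excluded points: as $x_*\to\infty$ the combination $m\,g_\fre(x_*,\infty)-\log|P_m(x_*)|$ tends to the finite value $-m\log\ca(\fre)-\log|c|$, and as $x_*\to c_j$ the pole $-\log|x_*-c_j|$ in $-\log|P_m(x_*)|$ is cancelled by the logarithmic pole of $-g_\fre(x_*,c_j)$. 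The main obstacle is really just the clean justification of the boundary identity — the quasi-everywhere boundary values together with the fact that harmonic measure is carried by the regular points — and everything after that is bookkeeping.
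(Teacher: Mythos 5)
Your proof is correct and follows essentially the same route as the paper: the paper defines $U(z)=mg_\fre(z,\infty)-\sum_{j=1}^m g_\fre(z,c_j)-\log|P_m(z)|$ all at once, observes that its singularities at the $c_j$ and at $\infty$ are removable and that its boundary values are $\log w_0$ q.e.\ on $\fre$, and concludes $U(z)=\int\log w_0\,d\om_\fre(\ze,z)$ by the harmonic-measure representation --- which is precisely your per-zero identity for $v$ summed over the zeros of $P_m$. Your continuity argument for the cases $x_*=\infty$ and $P_m(x_*)=0$ is just a slightly more explicit justification of the limit the paper takes without comment.
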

\begin{proof}
Since $\{c_j\}_{j=1}^m$ are the zeros of the polynomial $P_m(z)$, the function
\begin{align}
U(z):=mg_\fre(z,\infty)-\sum_{j=1}^m g_\fre(z,c_j)-\log|P_m(z)|
\end{align}
has a removable singularity at every $c_j$ and also at infinity. Hence, $U$ extends to a harmonic function on $\ol{\bbC}\bs\fre$. The boundary values of $U(z)$ are equal to $-\log|P_m(z)|=\log w_0(z)$ q.e. on $\fre$, and this implies that
\begin{align}
U(z)=\int \log w_0(\ze)d\om_\fre(\ze,z).
\end{align}
By \eqref{S-res}, the above integral equals $\log S(\fre,w_0,z)$ and taking $z\to x_*$ therefore yields \eqref{Sw-pol}.
\end{proof}

We are now in position to derive non-asymptotic lower and upper bounds for the Widom factors. This will be done in the following two theorems.

\begin{theorem}\label{wLB-thm}
Let $\fre\subset\bbR$ be a compact set of positive capacity, $x_*$ a point in $\ol\bbR\bs\fre$, and $w_0$ a weight of the form \eqref{wRat-res}. Then, for $n\ge n_0$, we have
\begin{align}\label{wLB-res}
  W_n(\fre,w_0,x_*) \ge \frac{2S(\fre,w_0,x_*)}{1+\exp[-2(n-m)g_\fre(x_*,\infty)]}.
\end{align}
\end{theorem}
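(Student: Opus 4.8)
The plan is to evaluate the identity \eqref{RnBn-res} at the normalization point $x_*$ and to recast the resulting expression as a comparison between the Green functions of $\fre$ and of the larger set $\fre_n$. I would first treat the generic case in which $x_*$ is finite and $P_m(x_*)\neq0$, so that $x_*$ is neither a zero nor a pole of $R_n$; the remaining cases ($x_*=\infty$ or $P_m(x_*)=0$) then follow by letting $x_*$ vary within its gap and using the continuity of $W_n(\fre,w_0,x_*)$ in $x_*$ within each gap, together with the limiting definition of $S(\fre,w_0,x_*)$ from Lemma~\ref{Sw-pol-lem}. In the generic case, since $T_{n,w_0,x_*}(x_*)=1$ and the sign in \eqref{R-res} is chosen so that $R_n(x_*)>0$, we have $R_n(x_*)=1/|P_m(x_*)|$. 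By Theorem~\ref{En-thm}$(ii)$ the point $x_*$ lies outside $\fre_n$, so $g_n(x_*)>0$ and $B_n(x_*)=e^{-g_n(x_*)}\in(0,1)$ by the normalization of $B_n$, where $g_n(x_*)=(d_n-r_n)g_{\fre_n}(x_*,\infty)+\sum_{j=1}^{r_n}g_{\fre_n}(x_*,c_j)$. Evaluating \eqref{RnBn-res} at $x_*$ and solving for the minimal norm then gives $t_n(\fre,w_0,x_*)=R_n(x_*)/\cosh\!\big(g_n(x_*)\big)$.

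Next I would insert this into the definition \eqref{Wn-res} and divide by $2S(\fre,w_0,x_*)$, using the explicit formula \eqref{Sw-pol}. The factors $|P_m(x_*)|$ cancel, and with the abbreviations $G:=g_\fre(x_*,\infty)$, $\nu:=(n-m)G$, $\beta:=g_n(x_*)$, and $E:=\nu+\sum_{j=1}^{m}g_\fre(x_*,c_j)$, the claimed bound reduces to the scalar inequality
\[
   \frac{e^{E}}{2\cosh\beta}\ \ge\ \frac{1}{1+e^{-2\nu}}\ =\ \frac{e^{\nu}}{2\cosh\nu},
\]
which is in turn equivalent to $e^{\sum_{j=1}^{m}g_\fre(x_*,c_j)}\cosh\nu\ge\cosh\beta$.

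The heart of the matter is this last inequality, and the main obstacle is that $\beta$ is assembled from Green functions of $\fre_n$, for which only the one-sided comparison $g_{\fre_n}\le g_\fre$ (coming from $\fre\subset\fre_n$ and monotonicity of Green functions) is available; thus $\beta$ cannot be bounded below directly. The resolution is that the terms $\sum_{j=1}^{r_n}g_{\fre_n}(x_*,c_j)$ hidden in $\beta$ are dominated by the matching terms in $E$, which I would make precise with a hyperbolic estimate. Setting $D:=(d_n-r_n)g_{\fre_n}(x_*,\infty)$ and $S_0:=\sum_{j=1}^{r_n}g_{\fre_n}(x_*,c_j)$, so that $\beta=D+S_0$, the bound $d_n-r_n\le n-m$ from \eqref{dn-rn} together with $g_{\fre_n}\le g_\fre$ yields $D\le\nu$ and $S_0\le\sum_{j=1}^{m}g_\fre(x_*,c_j)$. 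Hence $e^{\sum_j g_\fre(x_*,c_j)}\cosh\nu\ge e^{S_0}\cosh D$, and the elementary identity $e^{S_0}\cosh D-\cosh(D+S_0)=e^{-D}\sinh S_0\ge0$ gives $e^{S_0}\cosh D\ge\cosh\beta$. Chaining these yields the required inequality and hence \eqref{wLB-res} in the generic case. Finally, I would extend the bound to the degenerate cases $x_*=\infty$ and $P_m(x_*)=0$ by continuity as indicated above, noting that as $x_*\to\infty$ one has $G\to\infty$, so the denominator tends to $1$ and one recovers the Chebyshev bound, completing the proof.
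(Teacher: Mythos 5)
Your proposal is correct and follows essentially the same route as the paper's proof: reduce by continuity to the generic case $x_*$ finite with $P_m(x_*)\neq0$, evaluate \eqref{RnBn-res} at $x_*$ to get $t_n(\fre,w_0,x_*)=R_n(x_*)/\cosh g_n(x_*)$, combine with \eqref{Wn-res} and Lemma~\ref{Sw-pol-lem}, and then use $g_{\fre_n}\le g_\fre$, $d_n-r_n\le n-m$, $r_n\le m$ together with an elementary hyperbolic estimate. The only (immaterial) difference is the packaging of that last step: the paper bounds $2\cosh\beta\le 2\cosh E\le(1+e^{-2\nu})e^{E}$ directly, while you split $\beta=D+S_0$ and use the identity $e^{S_0}\cosh D-\cosh(D+S_0)=e^{-D}\sinh S_0\ge0$; the two computations are equivalent.
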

\begin{proof}
Since both sides of \eqref{wLB-res} are continuous with respect to $x_*$, it is sufficient to prove the inequality for finite values of $x_*$ where $P_m(x_*)\neq0$.

Suppose $x_*$ is finite and satisfies $P_m(x_*)\neq0$. Evaluating \eqref{RnBn-res} at $z=x_*$ then gives
\begin{align}
  \frac{1}{|P_m(x_*)|} = \frac{t_n(\fre,w,x_*)}{2}\bigg(B_n(x_*)+\frac{1}{B_n(x_*)}\bigg).
\end{align}
Multiplying by $e^{ng_\fre(x_*,\infty)}/S(\fre,w,x_*)$ and using \eqref{Wn-res} and Lemma \ref{Sw-pol-lem}
gives us that
\begin{align}\label{Wn-id1}
  \frac{W_n(\fre,w,x_*)}{2S(\fre,w_0,x_*)} &= \frac{e^{ng_\fre(x_*,\infty)}}{|P_m(x_*)|S(\fre,w_0,x_*)} \bigg(B_n(x_*)+\frac{1}{B_n(x_*)}\bigg)^{-1} \no\\
  &= \frac{e^{(n-m)g_\fre(x_*,\infty)+\sum_{j=1}^m g_\fre(x_*,c_j)}}{B_n(x_*)+\frac{1}{B_n(x_*)}}.
\end{align}
By \eqref{Bn-res},
\begin{align}
  B_n(x_*)+\frac{1}{B_n(x_*)} = 2\cosh\Big((d_n-r_n)g_{\fre_n}(x_*,\infty) + \sum_{j=1}^{r_n}g_{\fre_n}(x_*,c_j)\Big).
\end{align}
Since $\fre\subset\fre_n$, we have $g_\fre(z,x_*)\ge g_{\fre_n}(z,x_*)$. This, together with \eqref{dn-rn}, $r_n\le m$, and the fact that $\cosh$ is increasing on $(0,\infty)$, implies that
\begin{align}
  B_n(x_*)+\frac{1}{B_n(x_*)} &\le 2\cosh\Big((n-m)g_{\fre}(x_*,\infty) + \sum_{j=1}^{m}g_{\fre}(x_*,c_j)\Big) \no \\
  &\le \big(1+e^{-2(n-m)g_{\fre}(x_*,\infty)}\big) e^{(n-m)g_{\fre}(x_*,\infty)+\sum_{j=1}^{m}g_{\fre}(x_*,c_j)}.
\end{align}
Substituting this estimate into \eqref{Wn-id1} then yields the lower bound \eqref{wLB-res}.
\end{proof}

\begin{theorem}\label{wUB-thm}
Let $\fre\subset\bbR$ be a regular and compact Parreau--Widom set, $x_*$ a point in $\ol\bbR\bs\fre$, and $w_0$ a weight of the form \eqref{wRat-res}. Then, for all $n\ge n_0$, we have
\begin{align}\label{wUB-res}
  W_n(\fre,w_0,x_*) < 2S(\fre,w_0,x_*)\exp[PW(\fre,x_*)].
\end{align}
\end{theorem}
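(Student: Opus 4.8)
The plan is to run the argument dual to the proof of Theorem~\ref{wLB-thm}: there one used $\fre\subset\fre_n$ (hence $g_\fre\ge g_{\fre_n}$) to bound $B_n(x_*)+1/B_n(x_*)$ from above; here I instead need a lower bound on this quantity, which amounts to a Totik--Widom comparison of $g_\fre$ and $g_{\fre_n}$. As in Theorem~\ref{wLB-thm}, both sides of \eqref{wUB-res} depend continuously on $x_*$ within each gap, so it suffices to treat finite $x_*$ with $P_m(x_*)\neq0$. For such $x_*$, the gap containing $x_*$ does not meet $\fre_n$ (Theorem~\ref{En-thm}), so $x_*\notin\fre_n$ and, since $d_n-r_n\ge1$ for $n\ge n_0$, we have $0<g_n(x_*)<\infty$. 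Then $B_n(x_*)+1/B_n(x_*)=2\cosh(g_n(x_*))>e^{g_n(x_*)}$ strictly, and \eqref{Wn-id1} yields
\[
  \frac{W_n(\fre,w_0,x_*)}{2S(\fre,w_0,x_*)} < \exp\Big[(n-m)g_\fre(x_*,\infty)+\sum_{j=1}^m g_\fre(x_*,c_j) - g_n(x_*)\Big].
\]
It remains to bound the exponent $E$ by $PW(\fre,x_*)$; the strictness is then already secured by the $\cosh$ step.

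Using $n-m=(n-\deg T_{n,w_0,x_*})+(d_n-r_n)$ together with $r_n\le m$, I would split $E=A+B$, where
\[
  A := (d_n-r_n)\bigl[g_\fre(x_*,\infty)-g_{\fre_n}(x_*,\infty)\bigr] + \sum_{j=1}^{r_n}\bigl[g_\fre(x_*,c_j)-g_{\fre_n}(x_*,c_j)\bigr]
\]
is the part whose poles are matched with those of $g_n$, and
\[
  B := \bigl(n-\deg T_{n,w_0,x_*}\bigr)\,g_\fre(x_*,\infty) + \sum_{j=r_n+1}^m g_\fre(x_*,c_j)
\]
is the surplus coming from the zero of $T_{n,w_0,x_*}$ at $\infty$ (present only when $\deg T_{n,w_0,x_*}=n-1$) and from the cancelled zeros $\{c_j\}_{j>r_n}$. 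For the matched part, I use that for each pole $p\in\{\infty\}\cup\{c_j\}_{j\le r_n}$ the difference $D(\cdot,x_*):=g_\fre(\cdot,x_*)-g_{\fre_n}(\cdot,x_*)$ is nonnegative, bounded, and harmonic on $\ol\bbC\bs\fre_n$ (the pole at $x_*$ cancels), with boundary values $g_\fre(\cdot,x_*)$ on the regular set $\fre_n$. Hence, by symmetry of the Green function, $g_\fre(x_*,p)-g_{\fre_n}(x_*,p)=D(p,x_*)=\int_{\fre_n}g_\fre(\ze,x_*)\,d\om_{\fre_n}(\ze,p)$, and since $g_\fre(\cdot,x_*)$ vanishes on $\fre$ the integral runs over $\fre_n\bs\fre$, i.e.\ over the gaps of $\fre$ meeting $\fre_n$. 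On each such gap $K$ (none of which contains $x_*$) the function $g_\fre(\cdot,x_*)$ is positive with maximum at its critical point $\ze_K$, so $g_\fre(\ze,x_*)\le g_\fre(x_*,\ze_K)$ there. Interchanging sum and integral and invoking the harmonic-measure inequality \eqref{om-est-res} gives
\[
  A \le \sum_{K}g_\fre(x_*,\ze_K)\Big[(d_n-r_n)\om_{\fre_n}(K,\infty)+\sum_{j=1}^{r_n}\om_{\fre_n}(K,c_j)\Big] \le \sum_{K}g_\fre(x_*,\ze_K),
\]
the sum being over the gaps meeting $\fre_n$.

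For the surplus $B$, I would invoke parts (ii) and (iii) of Theorem~\ref{En-thm}: the gaps carrying the missing zeros---the bounded gaps of $\{c_j\}_{j>r_n}$ and, when $\deg T_{n,w_0,x_*}=n-1$, the unbounded gap carrying the zero at $\infty$---do \emph{not} meet $\fre_n$. Each such gap contains at most one zero of $T_{n,w_0,x_*}$, because the alternation points lie in $\fre$ and so none lies inside a gap; thus these gaps are pairwise distinct and, crucially, disjoint from the gaps appearing in the bound for $A$. Bounding each $g_\fre(x_*,c_j)$ and (with coefficient $n-\deg T_{n,w_0,x_*}\in\{0,1\}$) $g_\fre(x_*,\infty)$ by the corresponding critical-point value $g_\fre(x_*,\ze_K)$, I get $B\le\sum_K g_\fre(x_*,\ze_K)$ over these further gaps. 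Adding the two estimates, $E=A+B$ is at most a sum of $g_\fre(x_*,\ze_\ell)$ over pairwise distinct gaps, which by \eqref{PW-RES} and the symmetry $g_\fre(\ze_\ell,x_*)=g_\fre(x_*,\ze_\ell)$ is $\le PW(\fre,x_*)$, completing the bound.

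The main obstacle is precisely this bookkeeping of critical points: one must check that each $\ze_\ell$ is used at most once, i.e.\ that the gaps producing $A$ (those meeting $\fre_n$) are disjoint from the gaps producing $B$ (those carrying a missing zero, which by Theorem~\ref{En-thm} avoid $\fre_n$), and that the latter gaps are mutually distinct. Once the combinatorics is in place, the algebra is routine and the decisive quantitative input is the harmonic-measure identity \eqref{om-est-res}, while the strict inequality needs no extra work beyond $2\cosh(g_n(x_*))>e^{g_n(x_*)}$.
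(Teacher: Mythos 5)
Your argument is, in substance, the paper's own proof: the strict inequality obtained from $1+B_n(x_*)^2>1$ (your $2\cosh t>e^t$ step) is exactly \eqref{Wn-gn-ineq}; your split $E=A+B$ is the paper's decomposition \eqref{long-exp}; your treatment of $A$ via the harmonic function $D=g_\fre(\cdot,x_*)-g_{\fre_n}(\cdot,x_*)$, its harmonic-measure representation with boundary values supported on the gaps meeting $\fre_n$, the concavity/critical-point bound, and the constraint \eqref{om-est-res} reproduces \eqref{PW-est1}; and your bookkeeping for $B$ --- at most one zero of $T_{n,w_0,x_*}$ per gap since alternation points lie in $\fre$, with Theorem~\ref{En-thm}(ii)--(iii) guaranteeing that the gaps carrying $\{c_j\}_{j>r_n}$ and (when $\deg T_{n,w_0,x_*}=n-1$) the unbounded gap avoid $\fre_n$ and are therefore disjoint from the $A$-gaps --- is precisely the paper's argument with $I_1$, $I_2$, $\ell_0$, including the detail that none of these gaps contains $x_*$.

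The one genuine discrepancy is where you place the continuity reduction, and as written it has a gap. You dismiss the exceptional points ($x_*=\infty$ or $P_m(x_*)=0$) at the outset by asserting that \emph{both sides of \eqref{wUB-res}} are continuous in $x_*$ on each gap. For the right-hand side this requires continuity of $x_*\mapsto PW(\fre,x_*)=\sum_\ell g_\fre(\ze_\ell,x_*)$, an infinite sum in which the critical points $\ze_\ell=\ze_\ell(x_*)$ themselves move with $x_*$; this is plausible (say, with Harnack-type control of the tails) but nontrivial and nowhere established in the paper. The paper deliberately routes around it: it first extends the \emph{intermediate} inequality \eqref{Wn-gn-ineq} to the exceptional $x_*$ by a limit --- justifying continuity of its right-hand side by the observations that $d_n-r_n=\deg(T_{n,w_0,x_*})-m=n-m$ when $x_*$ lies in the unbounded gap, and that $c_j$ lying in the same gap as $x_*$ forces $j\le r_n$ --- and only afterwards estimates the exponent by $PW(\fre,x_*)$ at the fixed limiting point. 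Reordering your proof in the same way repairs it with no new ideas. A further caveat, shared equally by the paper: at the exceptional points the limit of strict inequalities yields only $\le$, and your remark that strictness ``is already secured by the $\cosh$ step'' does not cover them --- as $x\to\infty$ one has $B_n(x)\to0$, so the gain $1+B_n(x)^2>1$ evaporates, and indeed for $\fre$ a single interval, $m=0$, $x_*=\infty$ one gets equality $W_n(\fre,w_0)=2S(\fre,w_0)$ in \eqref{wUB-res}. So at those limiting $x_*$ only the non-strict bound is actually proved, by your argument and by the paper's alike.
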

\begin{proof}
Assume first that $x_*$ is finite and $P_m(x_*)\neq0$. Evaluating \eqref{RnBn-res} at $z=x_*$ and multiplying by $e^{ng_\fre(x_*,\infty)}B_n(x_*)/S(\fre,w_0,x_*)$ gives us
\begin{align}
  \frac{B_n(x_*)}{|P_m(x_*)|S(\fre,w_0,x_*)} = \frac{t_n(\fre,w_0,x_*)e^{ng_\fre(x_*,\infty)}}{2S(\fre,w_0,x_*)} \Big(1+B_n(x_*)^2\Big).
\end{align}
Then using \eqref{Wn-res}, \eqref{Bn-res}, \eqref{Sw-pol}, and the fact that $B_n(x_*)>0$, we obtain
\begin{align}\label{Wn-gn-ineq}
  \frac{W_n(\fre,w_0,x_*)}{2 S(\fre,w_0,x_*)} <
  \exp\bigg[&(n-m)g_\fre(\infty,x_*) - (d_n-r_n)g_{\fre_n}(\infty,x_*) \no\\
  &+ \sum_{j=1}^m g_\fre(c_j,x_*) - \sum_{j=1}^{r_n}g_{\fre_n}(c_j,x_*)\bigg].
\end{align}

Recall that $d_n-r_n=\deg(T_{n,w_0,x_*})-m$ is equal to $n-m$ if $x_*$ lies in the unbounded gap of $\fre$. Moreover, if $x_*$ and $c_j$ are in the same gap of $\fre$, then $T_{n,w_0,x_*}(c_j)\neq0$ so that $j\le r_n$. These observations imply that the right-hand side of \eqref{Wn-gn-ineq} is continuous with respect to $x_*$ on the gaps of $\fre$, in particular, at infinity and at every zero of $P_m$. Since the left-hand side of \eqref{Wn-gn-ineq} is also continuous with respect to $x_*$, by taking a limit, we can remove the restriction of $x_*$ being finite and $P_m$ being nonzero at this point.

Now, let $h(z)=g_\fre(z,x_*)-g_{\fre_n}(z,x_*)$ and note that 
\begin{align}\label{long-exp}
  &(n-m)g_\fre(\infty,x_*) - (d_n-r_n)g_{\fre_n}(\infty,x_*)
  + \sum_{j=1}^m g_\fre(c_j,x_*) - \sum_{j=1}^{r_n}g_{\fre_n}(c_j,x_*) \\
  &\quad = (d_n-r_n) h(\infty) + \sum_{j=1}^{r_n} h(c_j) +\bigl(n-\deg(T_{n,w_0,x_*})\bigr)g_\fre(\infty,x_*) + \sum_{j=r_n+1}^m g_\fre(c_j,x_*). \no
\end{align}
It suffices to show that this expression is bounded above by $PW(\fre,x_*)$.
We start by noting that $h(z)$ is harmonic on $\ol\bbC\bs\fre_n$ since the singularities of the Green functions at $x_*$ cancel. Therefore,
\begin{align}
   h(z)=\int h(x)d\om_{\fre_n}(x,z).
\end{align}
Let $\{K_\ell\}_{\ell\in I_1}$ be the gaps of $\fre$ that intersect $\fre_n$, so that $\fre_n\bs\fre\subset\cup_{\ell\in I_1} K_\ell$.
Since $\fre$ is regular, we have that $h(x)=0$ on $\fre$ and $h(x)=g_\fre(x,\infty)$ on $\fre_n\bs\fre$. Hence,
\begin{align}
  h(z) = \int_{\cup_{\ell\in I_1} K_\ell}g_\fre(x,x_*)d\om_{\fre_n}(x,z)
  &\le \sum_{\ell\in I_1}\sup_{x\in K_\ell}g_\fre(x,x_*)\om_{\fre_n}(K_\ell,z).
\end{align}
Since $g_\fre(z,x_*)$ is concave on every gap $K_\ell$ of $\fre$ that does not contain $x_*$, regularity of $\fre$ implies that the supremum of $g_\fre(z,x_*)$ on $K_\ell$ is attained at the critical point $\ze_\ell\in K_\ell$. Consequently,
\begin{align}
  h(z) \le  \sum_{\ell\in I_1} g_\fre(\ze_\ell,x_*)\om_{\fre_n}(K_\ell,z),
\end{align}
and applying this inequality at $z=\infty$ and $z=c_1,\dots,c_{r_n}$, we can use \eqref{om-est-res} to obtain
\begin{align}\label{PW-est1}
  & (d_n-r_n) h(\infty) + \sum_{j=1}^{r_n} h(c_j) \no\\
  &\quad \le
  \sum_{\ell\in I_1} g_\fre(\ze_\ell,\infty) \bigg[(d_n-r_n)\om_{\fre_n}(K_\ell,\infty) + \sum_{j=1}^{r_n}\om_{\fre_n}(K_\ell,c_j)\bigg] \no\\
  &\quad \le \sum_{\ell\in I_1} g_\fre(\ze_\ell,\infty).
\end{align}

Since $\{c_j\}_{j=r_n+1}^m$ are the zeros of $T_{n,w_0,x_*}$, they are contained in distinct gaps $\{K_\ell\}_{\ell\in I_2}$ of $\fre$. We thus have
\begin{align}\label{PW-est2}
  \sum_{j=r_n+1}^m g_\fre(c_j,x_*) \le \sum_{\ell\in I_2}^m g_{\fre}(\ze_\ell,x_*).
\end{align}
By Theorem~\ref{En-thm}, the gaps $\{K_\ell\}_{\ell\in I_2}$ do not intersect $\fre_n$ and are therefore distinct from the gaps $\{K_\ell\}_{\ell\in I_1}$.

If $\deg(T_{n,w_0,x_*})=n-1$, then, by Theorem~\ref{En-thm}, the unbounded gap $K_{\ell_0}$ of $\fre$ does not intersect $\fre_n$. Moreover, by the alternation theorem, $T_{n,w_0,x_*}$ has no zeros in $K_{\ell_0}$, and hence $K_{\ell_0}$ is distinct from the gaps $\{K_\ell\}_{\ell\in I\cup I_1}$. Let $I=I_1\cup I_2$ if $\deg(T_{n,w_0,x_*})=n$ and $I=I_1\cup I_2\cup\{\ell_0\}$ if $\deg(T_{n,w_0,x_*})=n-1$. Then combining the estimate $g_\fre(\infty,x_*)\le g_\fre(\ze_{\ell_0},x_*)$ with \eqref{PW-est1}, \eqref{PW-est2}, and \eqref{long-exp}, we get
\begin{align}
  &(n-m)g_\fre(\infty,x_*) - (d_n-r_n)g_{\fre_n}(\infty,x_*) + \sum_{j=1}^{m}g_{\fre}(c_j,\infty) - \sum_{j=1}^{r_n} g_{\fre_n}(c_j,\infty) \no \\
  &\quad \le
  \sum_{\ell\in I} g_\fre(\ze_\ell,\infty)
  \le PW(\fre,x_*).
\end{align}
Substituting this inequality into \eqref{Wn-gn-ineq} finally yields the upper bound \eqref{wUB-res}.
\end{proof}

\section{Asymptotic Bounds}\label{s3}

In this section we derive asymptotic bounds for the weighted Widom factors when $\fre\subset\bbR$ and $x_*\in\ol\bbR\bs\fre$.
We start by deriving an asymptotic lower bound which improves the universal lower bound \eqref{wSzLB-res}.

\begin{theorem}
Let $\fre\subset\bbR$ be a compact set of positive capacity, and let $x_*$ be a point in $\ol\bbR\bs\fre$. Assume that the weight $w: \fre\to [0,\infty)$ is continuous at a.e.\ point of $\fre$ and that $w\ge|P|$ on $\fre$ for some polynomial $P\not\equiv 0$. Then
\begin{align}\label{wALB-res}
  \liminf_{n\to\infty} W_n(\fre,w,x_*) \ge 2S(\fre,w,x_*).
\end{align}
\end{theorem}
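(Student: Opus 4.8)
The plan is to combine the non-asymptotic lower bound of Theorem~\ref{wLB-thm} with a monotonicity principle and an approximation of $w$ from below by rational weights. The starting point is the elementary observation that $w\mapsto W_n(\fre,w,x_*)$ is monotone: if $v\le w$ on $\fre$, then testing the $w$-minimizer against $v$ gives $t_n(\fre,v,x_*)\le t_n(\fre,w,x_*)$, and since both Widom factors carry the same factor $e^{ng_\fre(x_*,\infty)}$ (resp. $\ca(\fre)^{-n}$), we get $W_n(\fre,v,x_*)\le W_n(\fre,w,x_*)$. Thus it suffices to produce, for each $\eps>0$, a weight $v\le w$ for which the asymptotic lower bound is already available and with $S(\fre,v,x_*)\ge S(\fre,w,x_*)-\eps$.

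For the reciprocal weights $w_0=1/|P_m|$ the bound is immediate: since $\ol\bbC\bs\fre$ is connected and $\ca(\fre)>0$, we have $g_\fre(x_*,\infty)>0$ for every finite $x_*\in\ol\bbR\bs\fre$ (and the exponential term is absent when $x_*=\infty$), so the denominator in \eqref{wLB-res} tends to $1$ and $\liminf_n W_n(\fre,w_0,x_*)\ge 2S(\fre,w_0,x_*)$. The genuine difficulty is that $1/|P_m|$ is bounded and strictly positive on $\fre$, so it can never dominate a weight that vanishes; and $w\ge|P|$ forces the zeros of $w$ to lie among the finitely many real zeros $a_1,\dots,a_k\in\fre$ of $P$. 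The approximating weights must therefore be allowed to vanish there, so I would enlarge the admissible class to weights $|Q|/|P_m|$ with $Q$ a real polynomial whose zeros all lie on $\fre$, and transfer the bound by a degree-shift argument. Writing $QP_n=Q(x_*)S$ (resp. $QP_n=c_Q S$ when $x_*=\infty$) identifies the $|Q|/|P_m|$-extremal problem at degree $n$ with a \emph{constrained} $1/|P_m|$-problem at degree $n+\deg Q$; minimizing over the smaller (divisibility-constrained) class can only increase the norm, whence
\begin{align}
W_n(\fre,|Q|/|P_m|,x_*)\ge |Q(x_*)|\,e^{-\deg(Q)\,g_\fre(x_*,\infty)}\,W_{n+\deg Q}(\fre,1/|P_m|,x_*)
\end{align}
(with the prefactor replaced by $|c_Q|\,\ca(\fre)^{\deg Q}$ when $x_*=\infty$). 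Taking $\liminf_n$ and observing that $S(\fre,|Q|,x_*)$ equals exactly this prefactor — because $g_\fre(\cdot,a_i)\equiv 0$ by regularity of $\fre$, so the Green-function terms in the analog of \eqref{Sw-pol} drop out — and that the Szeg\H{o} factor is multiplicative, yields $\liminf_n W_n(\fre,|Q|/|P_m|,x_*)\ge 2S(\fre,|Q|/|P_m|,x_*)$.

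It remains to approximate $w$ from below by such weights with convergent Szeg\H{o} factor. I would take $Q$ to be the product of the real linear factors of $P$ vanishing on $\fre$, with their multiplicities, so that $\psi:=w/|Q|\ge|P/Q|\cdot 1$ is bounded below by a positive constant $\om_\fre(\cdot,x_*)$-a.e. (the finitely many $a_i$ are $\om_\fre$-null), is upper semi-continuous, and is continuous at a.e.\ point. Passing to the lower semi-continuous envelope $\psi_*$ — which agrees with $\psi$ a.e.\ by the continuity hypothesis — and writing $\psi_*$ as an increasing limit of continuous functions clipped below by $\delta_0>0$, monotone convergence produces a continuous $\ti\psi$ with $\delta_0\le\ti\psi\le\psi$ and $\int\log\ti\psi\,d\om_\fre(\cdot,x_*)$ as close as desired to $\int\log\psi\,d\om_\fre(\cdot,x_*)$. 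Since $1/\ti\psi$ is continuous, positive, and bounded, the Weierstrass theorem furnishes a real polynomial $P_m$, positive on $\fre$, with $|P_m|=P_m\ge 1/\ti\psi$ and $\int\log|P_m|\,d\om_\fre(\cdot,x_*)$ within $\eps$ of $-\int\log\ti\psi\,d\om_\fre(\cdot,x_*)$. Then $w_0:=|Q|/|P_m|\le|Q|\ti\psi\le w$ on $\fre$, while $S(\fre,w_0,x_*)=S(\fre,|Q|,x_*)\,S(\fre,1/|P_m|,x_*)\to S(\fre,|Q|,x_*)\,S(\fre,\psi,x_*)=S(\fre,w,x_*)$. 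Monotonicity, the previous paragraph, and letting $\eps\to0$ then give \eqref{wALB-res}.

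The main obstacle is precisely the presence of zeros of $w$: naive domination by a reciprocal-polynomial weight is impossible there, and the architecture of the argument — factoring out $|Q|$, the degree-shift reduction of the lower bound to the $1/|P_m|$ case, and the verification that the capacity and Green-function prefactors reassemble into $2S(\fre,|Q|/|P_m|,x_*)$ — is designed to circumvent it. The hypotheses are exactly calibrated to this: $w\ge|P|$ guarantees both that the zeros of $w$ are finite in number and of controlled (polynomial) order, so that $\psi$ stays bounded below and $\log w$ is $\om_\fre$-integrable, while continuity a.e.\ is what forces $\psi_*=\psi$ to hold $\om_\fre$-a.e.\ and thereby lets the lower approximation recover the full Szeg\H{o} integral.
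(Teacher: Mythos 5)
Your proposal follows essentially the same route as the paper's proof: approximate $w$ from below by weights of the form $|Q|/P_m$, shift the degree by multiplying the extremal polynomial by $Q$ so as to land in the reciprocal-polynomial class covered by Theorem~\ref{wLB-thm}, and pass to the limit in the Szeg\H{o} factors. (The paper does this with $Q=P$ and $\wti w_m=|P|/P_m$, and with a cruder approximation scheme --- polynomials $P_m$ with $f\le P_m\le 2$, $P_m\to f=|P|/w$ a.e.\ --- in place of your lsc-envelope/Weierstrass construction; both work.) In fact your bookkeeping at the crucial step is \emph{more} careful than the paper's: the paper asserts $W_n(\fre,w,x_*)\ge W_{n+d}(\fre,w_m,x_*)$, which as stated silently drops exactly the factor $e^{-d\,g_\fre(x_*,\infty)}$ (resp.\ $\ca(\fre)^{d}$ when $x_*=\infty$) that you retain, and it is precisely your observation that this prefactor recombines with $S(\fre,1/P_m,x_*)$ into $S(\fre,|Q|/P_m,x_*)$ --- after first stripping the off-$\fre$ zeros of $P$ out of $Q$, a preprocessing the paper skips --- that makes the accounting close. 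Your monotonicity lemma, the treatment of the denominator in \eqref{wLB-res} as $n\to\infty$, and the limit passage in the Szeg\H{o} factors are all correct.

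There is, however, one genuine gap relative to the stated hypotheses: your identification $S(\fre,|Q|,x_*)=|Q(x_*)|\,e^{-\deg(Q)\,g_\fre(x_*,\infty)}$ is justified ``by regularity of $\fre$,'' but regularity is \emph{not} assumed in this theorem --- only positive capacity is. The identity rests on $\int\log|x-a|\,d\om_\fre(x,x_*)=\log|x_*-a|-g_\fre(x_*,\infty)$ for each zero $a\in\fre$ of $Q$, which holds when $a$ is a regular point of $\fre$ but at an irregular point acquires the additional nonnegative term $\limsup_{z\to a}g_\fre(z,x_*)$, which can be strictly positive (e.g.\ $\fre=[0,1]\cup\{2\}$ with $a=2$, where $\om_\fre=\om_{[0,1]}$ and the extra term is $g_{[0,1]}(2,x_*)>0$). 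Since a zero of $w$ at such a point forces a zero of \emph{every} admissible minorant there, your chain then only yields $2S(\fre,w,x_*)$ multiplied by a deficit factor strictly less than $1$, and there is no way within your architecture to relocate that zero. So your argument is complete for regular $\fre$, and for general $\fre$ of positive capacity whenever the forced zeros sit at regular points, but not under the hypotheses as stated; closing it in full generality requires an extra argument at irregular points (which form a polar, hence $\om_\fre$-null, set). To be fair, the paper's own write-up elides the very same issue inside its unjustified ``Therefore'' step, so your proposal has the merit of making explicit the point at which care is actually needed.
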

\begin{remark}
If $w$ is continuous on $\fre$, then $w\ge|P|$ is equivalent to the conditions that $w$ has at most finitely many zeros on $\fre$ and at each such zero $x_0$,
\begin{align}
\limsup_{x\to x_0 \atop x\in\fre}\frac{\log w(x)}{\log|x-x_0|}<\infty.
\end{align}
Equivalently, there exists an $\al>0$ such that $w(x)\ge|x-x_0|^\al$ in a small neighborhood of $x_0$.
\end{remark}
\begin{proof}
Since both $W_n(\fre,w,x_*)$ and $S(\fre,w,x_*)$ scale linearly with $w$, we may assume without loss of generality that $P(x_*)=1$.

Let $f(x)=|P(x)|/w(x)$ for all $x\in\fre$ where $w(x)\neq0$, and set $f(x)=1$ when $w(x)=0$. Then $0\le f\le 1$ on $\fre$ and $f$ is continuous at a.e.\ point of $\fre$. It follows that there are polynomials $\{P_m\}_{m=1}^\infty$ with zeros outside of $\fre$ such that $f\le P_m\le2$ on $\fre$ and $P_m\to f$ pointwise a.e.\ on $\fre$.

Let $w_m=1/{P_m}$ and $\wti w_m=|P|w_m$. Then for each $m\ge1$, we have
\begin{align}
   \frac1f=\frac{w}{|P|}\ge\frac1{P_m}\ge\frac12,
\end{align}
and hence $w \ge \wti w_m \ge \frac12|P|$ on $\fre$ and $\wti w_m\to w$ a.e.\ on $\fre$. Let $d=\deg(P)$. Since $w \ge \wti w_m$ and $P(x_*)=1$, we have for all $n$ that
\begin{align}
  \|w T_{n,w,x_*}\|_\fre \ge \|\wti w_m T_{n,w,x_*}\|_\fre
  = \|w_m P T_{n,w,x_*}\|_\fre \ge \|w_m T_{n+d,w_m,x_*}\|_\fre.
\end{align}
Therefore, $W_n(\fre,w,x_*) \ge W_{n+d}(\fre,w_m,x_*)$.
		
By Theorem~\ref{wLB-thm}, $\liminf_{n\to\infty}W_n(\fre,w_m,x_*)\ge2S(\fre,w_m,x_*)$, and hence
\begin{align}\label{W-Swm-lb}
  \liminf_{n\to\infty}W_n(\fre,w,x_*) \ge \liminf_{n\to\infty}W_{n+d}(\fre,w_m,x_*) \ge 2S(\fre,w_m,x_*).
\end{align}
As in Lemma~\ref{Sw-pol-lem}, it is easy to see that $w_0=|P|$ is a Szeg\H{o} class weight. Since $\frac12 w_0\le w_m \le w$ and $w$ is bounded, it therefore follows from the dominated convergence theorem that $S(\fre,w_m,x_*)\to S(\fre,w,x_*)$ as $m\to\infty$. Thus, taking $m\to\infty$ in \eqref{W-Swm-lb} yields \eqref{wALB-res} and the proof is complete.
\end{proof}

Next, we show an asymptotic upper bound which holds for very general weights. In particular, this asymptotic upper bound holds for weights with arbitrary zeros and for weights that are not of Szeg\H{o} class.

\begin{theorem}
Let $\fre\subset\bbR$ be a regular and compact Parreau--Widom set, $x_*$ a point in $\ol\bbR\bs\fre$, and $w$ an upper semi-continuous weight on $\fre$. Then
\begin{align}\label{wAUB-res}
  \limsup_{n\to\infty} W_n(\fre,w,x_*) \le 2S(\fre,w,x_*)\exp[PW(\fre,x_*)].
\end{align}
\end{theorem}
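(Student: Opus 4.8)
The plan is to combine the non-asymptotic bound \eqref{wUB-res} with a monotonicity property of $W_n$ in the weight and a one-sided approximation of $w$ by reciprocals of polynomial moduli. The starting observation is that $W_n(\fre,w,x_*)$ is monotone in $w$: if $w\le\wti w$ on $\fre$, then $\|wP\|_\fre\le\|\wti wP\|_\fre$ for every $P\in\calP_n(x_*)$, so minimizing over $\calP_n(x_*)$ gives $t_n(\fre,w,x_*)\le t_n(\fre,\wti w,x_*)$ and hence $W_n(\fre,w,x_*)\le W_n(\fre,\wti w,x_*)$, since the normalizing factor in the definition of $W_n$ does not depend on $w$. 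Consequently, it suffices to produce, for the given upper semi-continuous $w$, a sequence of weights $w_0^{(k)}=1/|P^{(k)}|$ of the form \eqref{wRat-res} with $w_0^{(k)}\ge w$ on $\fre$ and $S(\fre,w_0^{(k)},x_*)\to S(\fre,w,x_*)$: applying Theorem~\ref{wUB-thm} to each $w_0^{(k)}$ and letting $n\to\infty$ (for fixed $k$ the finitely many excluded indices $n<n_0^{(k)}$ do not affect the $\limsup$) yields $\limsup_n W_n(\fre,w,x_*)\le 2S(\fre,w_0^{(k)},x_*)\exp[PW(\fre,x_*)]$, and then $k\to\infty$ gives \eqref{wAUB-res}.

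The construction of these dominating weights is the crux, and I would split it in two. First, pass from $w$ to continuous weights: since $w$ is upper semi-continuous on the compact set $\fre$, the envelopes $g_k(x):=\sup_{y\in\fre}\bigl(w(y)-k|x-y|\bigr)\vee\tfrac1k$ are continuous, strictly positive, decrease to $w$, and satisfy $g_k\ge w$. By monotone convergence, dominating $\log g_k$ from above by the bounded $\log g_1$, one gets $\int\log g_k\,d\om_\fre(\cdot,x_*)\to\int\log w\,d\om_\fre(\cdot,x_*)$ with the convention that both sides may be $-\infty$; in particular $S(\fre,g_k,x_*)\to S(\fre,w,x_*)$. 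By the monotonicity above it therefore remains to dominate a continuous strictly positive weight $g$ by a weight $1/|P_m|\ge g$ whose Szeg\H{o} factor is as close as we like to $S(\fre,g,x_*)$.

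For this last step I would use an outer-function/Fej\'er construction. After an affine change of variables we may assume $\fre\subset[-1,1]$; extend $f:=1/g$ to a continuous positive function on $[-1,1]$. The function $v(\theta):=\tfrac12\log f(\cos\theta)$ is continuous and even, so, choosing a cosine polynomial close to $v$ and forming $F=\exp(\text{its analytic completion})$, one obtains an entire function with $|F(e^{i\theta})|^2$ uniformly close to $f(\cos\theta)$; truncating the Taylor series of $F$ to a polynomial $F_N$ with real coefficients, the even trigonometric polynomial $|F_N(e^{i\theta})|^2=F_N(e^{i\theta})F_N(e^{-i\theta})$ equals $Q(\cos\theta)$ for a real polynomial $Q\ge0$, and $Q\to f$ uniformly on $[-1,1]$ as the approximations improve. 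For large $N$ we have $Q>0$ on $[-1,1]\supseteq\fre$, so its zeros lie off $\fre$; rescaling by a factor $1+\delta$ with $\delta\downarrow0$ so that $P_m:=Q/(1+\delta)\le f$, the weight $1/|P_m|=(1+\delta)/Q\ge g$ is of the form \eqref{wRat-res} and, by the uniform convergence $Q\to f$, satisfies $\int\log(1/|P_m|)\,d\om_\fre(\cdot,x_*)\to\int\log g\,d\om_\fre(\cdot,x_*)$. This produces the required dominating weights.

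The main obstacle is precisely this one-sided polynomial approximation: one must dominate a prescribed continuous positive function by $|P_m|$ while controlling $\int\log|P_m|\,d\om_\fre(\cdot,x_*)$, and the subtlety is that the modulus of a polynomial on $\fre\subset\bbR$ is not freely prescribable. The device of realizing $|F_N(e^{i\theta})|^2$ as $Q(\cos\theta)$ for a genuine real polynomial $Q$ is what makes the reciprocal weight $1/|P_m|$ admissible for Theorem~\ref{wUB-thm}. The non-Szeg\H{o} case $\int\log w\,d\om_\fre(\cdot,x_*)=-\infty$ then emerges automatically, since there $S(\fre,w_0^{(k)},x_*)\to0$ forces $\limsup_n W_n(\fre,w,x_*)=0$, in agreement with \eqref{wAUB-res}.
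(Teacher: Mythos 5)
Your proposal is correct and shares the paper's overall skeleton: monotonicity of $W_n(\fre,\cdot,x_*)$ in the weight, domination of $w$ by weights of the form \eqref{wRat-res}, an appeal to Theorem~\ref{wUB-thm} for each dominating weight (with the finitely many excluded indices $n<n_0$ correctly dismissed), and convergence of the Szeg\H{o} factors, including the automatic treatment of the non-Szeg\H{o} case. Where you genuinely differ is in manufacturing the dominating weights. The paper adds $\eps$ to $w$, observes that $1/(w+\eps)$ is lower semi-continuous and pinched between positive constants, approximates it pointwise from below by polynomials $P_m$ with $\tfrac{1}{2(C+\eps)}\le P_m\le\tfrac1{w+\eps}$ on $\fre$ (a lower semi-continuous function is an increasing limit of continuous functions, then Weierstrass), and passes to the limit by dominated convergence in $m$ followed by monotone convergence in $\eps$. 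You instead descend through continuous sup-convolution envelopes $g_k\downarrow w$ and then uniformly approximate $1/g_k$ by a polynomial positive on $[-1,1]$ via the Fej\'er--Riesz-type device $|F_N(e^{i\theta})|^2=Q(\cos\theta)$. Both routes are valid, and yours has the mild advantage of uniform (rather than pointwise) control in the inner approximation; but your final step is heavier than necessary: since a real polynomial that is positive on $\fre$ satisfies $|P|=P$ there, the concern that ``the modulus of a polynomial on $\fre\subset\bbR$ is not freely prescribable'' dissolves, and plain Weierstrass applied to a continuous positive extension of $1/g_k$, lowered by a small constant so that $0<P\le 1/g_k$ on $\fre$, already yields admissible weights $1/|P|\ge g_k$ with $\log P\to\log(1/g_k)$ uniformly on $\fre$ --- no outer function, analytic completion, or trigonometric positivity trick is required.
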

\begin{proof}
Let $\eps>0$ be given. Since $w$ is upper semi-continuous, there exists some constant $C>0$ such that $w\le C$ on $\fre$. Hence the function $\frac{1}{w(x)+\eps}$ is lower semi-continuous on $\fre$ and it satisfies that
\begin{align}
   \frac{1}{C+\eps}\le \frac{1}{w(x)+\eps}\le \frac{1}{\eps}.
\end{align}
It follows that there exist polynomials $\{P_m\}_{m=1}^\infty$ such that
\begin{align}
   \frac1{2(C+\eps)}\le P_m\le \frac{1}{w+\eps} \; \mbox{ on $\fre$}
\end{align}
and $|P_m-\frac{1}{w+\eps}|\to0$ pointwise on $\fre$.
Let $w_m=1/{P_m}$ and note that for each $m\ge1$, Theorem~\ref{wUB-thm} implies that
\begin{align}
  \limsup_{n\to\infty}W_n(\fre,w_m,x_*) \le 2S(\fre,w_m,x_*)\exp[PW(\fre)].
\end{align}
Since, by construction, $w\le w+\eps\le w_m$ on $\fre$, we have $W_n(\fre,w,x_*)\le W_n(\fre,w_m,x_*)$ for all $n$, and hence
\begin{align}\label{W-Swm-ub}
  \limsup_{n\to\infty}W_n(\fre,w,x_*) \le 2S(\fre,w_m,x_*)\exp[PW(\fre)].
\end{align}
By construction, we also have $\eps\le w_m\le 2(C+\eps)$ on $\fre$ and $w_m\to w+\eps$ pointwise on $\fre$. Hence, $S(\fre,w_m,x_*)\to S(\fre,w+\eps,x_*)$ as $m\to\infty$ by the dominated convergence theorem and $S(\fre,w+\eps,x_*)\to S(\fre,w,x_*)$ as $\eps\to0$ by the monotone convergence theorem. Thus, taking $m\to\infty$ and then $\eps\to0$ in \eqref{W-Swm-ub} yields \eqref{wAUB-res}.
\end{proof}

As a corollary of the asymptotic upper bound \eqref{wAUB-res} and the universal lower bound \eqref{wSzLB-res}, we get the following Szeg\H{o}-type theorem.

\begin{theorem}
Let $\fre\subset\bbR$ be a regular and compact Parreau--Widom set, $x_*$ a point in $\ol\bbR\bs\fre$, and $w$ an upper semi-continuous weight on $\fre$. Then
\begin{align}
  (a) \; \inf_{n}W_n(\fre,w,x_*) > 0 \quad\Longleftrightarrow\quad
  (b) \; \int\log w(z)\,d\rho_\fre(z)>-\infty.
\end{align}
Moreover, if either $(a)$ or $(b)$ holds, then also $\sup_{n}W_n(\fre,w,x_*)<\infty$.
\end{theorem}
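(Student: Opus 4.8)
The plan is to deduce this Szeg\H{o}-type dichotomy directly from the two bounds already established: the universal lower bound \eqref{wSzLB-res} and the asymptotic upper bound \eqref{wAUB-res}. The bridge between the analytic quantity $S(\fre,w,x_*)$ appearing in both bounds and the potential-theoretic condition $(b)$ is the observation that $(b)$ is equivalent to $w$ being of Szeg\H{o} class, i.e., to $S(\fre,w,x_*)>0$.

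First I would record this equivalence. Since $w$ is upper semi-continuous on the compact set $\fre$, it is bounded above by some constant $C$, so $\log w\le\log C$ and the negative part $(\log w)_-$ carries all the possible divergence. By the comparability of harmonic measures, $c_1(x_*)\rho_\fre\le\om_\fre(\cdot,x_*)\le c_2(x_*)\rho_\fre$, one has
\begin{align*}
  c_1(x_*)\int(\log w)_-\,d\rho_\fre
  &\le\int(\log w)_-\,d\om_\fre(\cdot,x_*)\\
  &\le c_2(x_*)\int(\log w)_-\,d\rho_\fre,
\end{align*}
so that $\int\log w\,d\om_\fre(\cdot,x_*)>-\infty$ if and only if $\int\log w\,d\rho_\fre>-\infty$. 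In view of \eqref{S-res}, this says precisely that $(b)$ holds if and only if $S(\fre,w,x_*)>0$.

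Next I would prove $(b)\Rightarrow(a)$ together with the boundedness statement. If $(b)$ holds, then $S(\fre,w,x_*)>0$ and the lower bound \eqref{wSzLB-res} gives $W_n(\fre,w,x_*)\ge S(\fre,w,x_*)$ for all $n\ge1$, whence $\inf_nW_n(\fre,w,x_*)\ge S(\fre,w,x_*)>0$, i.e.\ $(a)$. For the supremum, note that $S(\fre,w,x_*)\le C<\infty$ and, since $\fre$ is Parreau--Widom, $PW(\fre,x_*)<\infty$; hence the asymptotic upper bound \eqref{wAUB-res} yields $\limsup_nW_n(\fre,w,x_*)\le 2S(\fre,w,x_*)\exp[PW(\fre,x_*)]<\infty$. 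As each individual $W_n(\fre,w,x_*)$ is finite and only finitely many indices fall outside any neighborhood of the $\limsup$, we conclude $\sup_nW_n(\fre,w,x_*)<\infty$.

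For the converse I would argue the contrapositive. Suppose $(b)$ fails, so $S(\fre,w,x_*)=0$; then \eqref{wAUB-res} gives $\limsup_nW_n(\fre,w,x_*)\le0$, and since $W_n(\fre,w,x_*)\ge0$ this forces $W_n(\fre,w,x_*)\to0$, so $\inf_nW_n(\fre,w,x_*)=0$ and $(a)$ fails. Combined with the previous paragraph this establishes $(a)\Leftrightarrow(b)$, and the boundedness of $\sup_nW_n$ under either condition follows from the $(b)\Rightarrow$ case. The only points requiring care --- and the closest thing to an obstacle --- are the measure-comparability step that identifies $(b)$ with the Szeg\H{o} class condition (which crucially needs $\log w$ bounded above, guaranteed by upper semi-continuity) and the passage from a finite $\limsup$ to a finite $\sup$, which relies on the finiteness of each $W_n(\fre,w,x_*)$.
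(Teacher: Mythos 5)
Your proof is correct and follows exactly the route the paper intends: the paper states this theorem as an immediate corollary of the universal lower bound \eqref{wSzLB-res} and the asymptotic upper bound \eqref{wAUB-res}, with the identification of condition $(b)$ with $S(\fre,w,x_*)>0$ resting on the same comparability of harmonic measures ($c_1(x_*)\rho_\fre\le\om_\fre(\cdot,x_*)\le c_2(x_*)\rho_\fre$) already recorded in Section~1. Your added care about $\log w$ being bounded above (from upper semi-continuity) and about passing from a finite $\limsup$ to a finite $\sup$ simply makes explicit what the paper leaves implicit.
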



\end{document}